\newtheorem{thm}{Theorem}[section]
\newtheorem{lem}[thm]{Lemma}
\newtheorem{cor}[thm]{Corollary}
\theoremstyle{definition}
\theoremstyle{remark}
\numberwithin{equation}{section}
\providecommand\ufootnote[1]{{\let\thefootnote\relax\footnote[0]{#1}}}
\newcommand{\vespa}{\vspace{1em}}
\newcommand{\cc}{\mathcal C}
\newcommand{\uc}{\mathcal U}
\newcommand{\rb}{\mathbb R}
\newcommand{\ci}{{\mathcal C}^\infty}
\newcommand{\Cal}{\mathcal}
\newcommand{\N}{\mathbb{N}}
\newcommand{\ol}{\overline}
\newcommand{\pa}{\partial}
\newcommand{\opa}{\ol\partial}
\newcommand{\wt}{\widetilde}
\DeclareMathOperator{\supp}{supp} \DeclareMathOperator{\im}{Im}
\DeclareMathOperator{\ke}{Ker}
 \DeclareMathOperator{\Dom}{Dom}
\begin{document}

\title{Global homotopy formulas on $q$-concave $CR$ manifolds for large degrees}

\author{Till BR\"ONNLE, Christine LAURENT-THI\'{E}BAUT  and J\"{u}rgen LEITERER}

\date{Pr\'{e}publication de l'Institut Fourier n$^\circ$~xxx (2000)\\
\vspace{5pt} {\tt
http://www-fourier.ujf-grenoble.fr/prepublications.html }}
\date{}
\maketitle

\ufootnote{\hskip-0.6cm  {\it A.M.S. Classification}~:
32V20.\newline {\it Key words}~: Homotopy formula, Tangential Cauchy
Riemann equation, CR manifold.}

\bibliographystyle{amsplain}

It is well known that homotopy formulas are very useful in complex
anlysis. Such formulas were constructed by means of integral
operators in the 70's by Grauert and Lieb, Henkin, Ramirez, Kerzman
and Stein for the Cauchy-Riemann operator (see the historical notes
in \cite{Ralivre} for more 
details) and later by Airapetjan and
Henkin \cite{AiHe}, Polyakov \cite{Poly}, Barkatou and
Laurent-Thi\'ebaut \cite{BaLa} for the tangential Cauchy-Riemann
operator. In most cases only local formulas were obtained. The
question arises if it is  possible to globalize these formulas?
Gluing together local formulas, it is rather easy to get a global
formula which is not yet a homotopy formula, but ''almost'', up to a
compact perturbation. Then the main work is to eliminate this
compact perturbation. A first step in that direction was done in
\cite{Leglob} and then applied in \cite{LaLeseparation} to get a
global homotopy formula for the Cauchy-Riemann operator in
$q$-concave-$q^*$-convex domains of a complex manifold. More
recently Polyakov \cite{Poly1,Poly2} proved global homotopy formulas
for the forms of small degrees for the tangential Cauchy-Riemann
operator on compact $q$-concave $CR$ manifolds and used them to
study the embedding problem for $CR$ manifolds. But his global
operators are less regular than the local ones. Then in
\cite{LaLeglobal} it was obtained that, in the case of forms of
small degrees, it is possible to eliminate the compact perturbation
without any loss of smoothness.

In the present paper we extend the results of \cite{LaLeglobal} to
the case of the forms of large degree (cf. Theorem \ref{20.4.07a}). The
main tools are the same as in \cite{LaLeglobal}, for example the
functional analytic lemma (see Lemma \ref{10.4.07a}) and an
induction lemma (see Lemma \ref{15.4.07-}), but new difficulties
appear because now the induction does not start with functions but
with forms of positive degree. For that we need the Friedrichs
approximation lemma for first order differential operators, well
known for the $L^2$-topology, in the $\cc^k$-topology. Since it
seems that this approximation result does not exist in the
literature, it was proved by the first author in his Diplomarbeit
\cite{Brodipl}. This proof is given at the end of this paper.

As a corollary we get a Dolbeault isomorphism type result (cf.
Corollary \ref{dolbeault}).

In the case of the Cauchy-Riemann operator on a complex manifold,
the Dolbeault isomorphism says that all the Dolbeault cohomology
groups of bidegree $(p,q)$ for currents, $\ci$-forms or
$\cc^k$-forms are isomorphic to the $q$th cohomology group of the
sheaf of germs of holomorphic $p$-forms. This is a consequence of
the de Rham-Weil isomorphism, of the Dolbeault lemma and of the
holomorphy of the $\opa$-closed $(p,0)$-currents.

Let $M$ be a $q$-concave, $q\ge 1$, CR generic submanifold of real
codimension $k$ of a complex manifold $X$ of complex dimension $n$.
As in the complex case, we have smoothness of $\opa_b$-closed
$(n,0)$-currents and local solvability of the tangential
Cauchy-Riemann equation for forms of bidegree $(n,r)$ with $1\le
r\le q-1$ and $n-k-q+1\le r\le n-k$. For the small degrees, then the
Dolbeault isomorphism for the $\opa_b$-cohomology  follows from the
de Rham-Weil isomorphism. For the large degrees , i.e. bidegree
$(n,r)$ with $n-k-q+1\le r\le n-k$, $\opa_b$-closed currents of
bidegree $(n,r)$ need not to be smooth. Nevertheless the Dolbeault
isomorphism between the $\opa_b$-cohomology for currents and the
$\opa_b$-cohomology for $\cc^\infty$-smooth forms  is proved for
$n-k-q+2\le r\le n-k$ in \cite{LaLeDolbeault} under the additional
hypothesis that the conormal bundle of $M$ in $X$ is trivial and in
\cite{Sareg} without any additional hypothesis. Here we prove that,
if moreover $M$ is compact, the
 $\opa_b$-cohomology group for
$\cc^l$-smooth $(n,r)$-forms, $l\in\N$, and  for $\cc^\infty$-smooth
$(n,r)$-forms are isomorphic in the case of the large degrees,
included $r=n-k-q+1$. In \cite{LaLeDolbeault} the reduction to local
results was based on cohomological algebra arguments, in
\cite{Sareg} on the construction of a regularization formula for
$\opa_b$, here it uses functional analysis.

\section{Global homotopy formula}\label{s01}

In this section, $X$ is a complex manifold and $E$ is a holomorphic
vector bundle on $X$. Further, $M\subseteq X$ is a generic, compact
$CR$ submanifold of class $\Cal C^\infty$ of $X$, $k$ is the real
codimension of $M$ in $X$, and $\Cal O$ is the trivial complex line
bundle on $X$.

If $U\subseteq M$ is an open set, then, for $0\le r\le n-k$, the
following notations are used:
\begin{itemize}
\item[-] $\Cal C^{\infty}_{n,r}(U,E)$ is the Fr\'{e}chet space  of $E$-valued $(n,r)$-forms on $U$
which are of class $\Cal C^\infty$, endowed with the $\Cal
C^\infty$-topology.
\item[-]  $\Cal
Z^{\infty}_{n,r}(U,E)$ is the subspace of all closed forms in $\Cal
C^{\infty}_{n,r}( U,E)$, endowed with the same topology.
\item[-] $\Cal
C^{l+\alpha}_{n,r}(\overline U,E)$, $l\in \N$, $0\le \alpha<1$, is
the Banach space of $l$ times differentiable $E$-valued
$(n,r)$-forms whose derivatives up to order $l$ admit extensions to
$\overline U$ which are H\"older continuous with exponent $\alpha$,
endowed with the $\Cal C^{l+\alpha}$-topology. \item[-] $\Cal
Z^{l+\alpha}_{n,r}(\overline U,E)$ is the subspace of all closed
forms in $\Cal C^{l+\alpha}_{n,r}(\overline U,E)$, endowed with the
same topology.
\item[-] If $r\ge 1$, then $\Cal B^{l+\alpha\rightarrow
l}_{n,r}(M,E)$ is the space of all $f\in \Cal C^{l}_{n,r}(M,E)$ such
that $f=du$ for some $u\in\Cal C^{l+\alpha}_{n,r-1}(M,E)$. Sometimes
we write also  $$\Cal B^{\infty}_{n,r}(M,E):=\Cal
B^{\infty\rightarrow \infty}_{n,r}(M,E):=d\Cal
C^{\infty}_{n,r-1}(M,E).$$
\item[-] $(\Dom d)^0_{n,r}(M,E)$ is the space of all $f\in \Cal
C^0_{n,r}(M,E)$ such that also $df$ is continuous on $M$.
\end{itemize}

\vspace{2mm} If $0<\alpha<1$ and $q$ is an integer with $1\le q\le
n-k$, then we shall say that {\bf condition $H(\alpha,q)$ is
satisfied} if, for each point in $M$, there exist a neighborhood $U$
and linear operators
$$
T_r:\Cal C^0_{n,r}(M,\Cal O)\rightarrow\Cal C^0_{n,r-1}\big(U,\Cal
O\big)\,,\qquad 1\le r\le q~ {\rm and}~ n-k-q+1\le r\le n-k,
$$ with the following two properties:

(i) For all $l\in \N$ and $1\le r\le q$ or $n-k-q+1\le r\le n-k$,
$$
T_r\Big(\Cal C^l_{n,r}(M,\Cal O)\Big)\subseteq \Cal
C^{l+\alpha}_{n,r-1}(\overline U,\Cal O)
$$ and $T_r$ is continuous as an operator between $\Cal
C^l_{n,r}(M,\Cal O)$ and $\Cal C^{l+\alpha}_{n,r}(\overline U,\Cal
O)$.

(ii) If $f\in (\Dom d)^0_{n,r}(M,\Cal O)$, $0\le r\le q-1$, has
compact support in $U$, then, on $U$,
\begin{equation}\label{4.3.07}
f=\begin{cases}T_{1}df&\text{if
}r=0\,,\\dT_rf+T_{r+1}df\qquad&\text{if }1\le r\le q-1~ {\rm or}~
n-k-q+1\le r\le n-k\,.\end{cases}
\end{equation}

If $M$ is $q$-concave in the sense of Henkin \cite{Hecras}, then it
is known since 1981 \cite{Hecras,AiHe} that condition $H(\alpha,q)$
is satisfied for $0<\alpha<1/2$. More recently it was proved in
\cite{BaLa} that then also condition $H(1/2,q)$ is satisfied.

\begin{thm}\label{20.4.07a} Suppose, for some $0<\alpha<1$ and some integer
$q$ with $1\le q\le n-k$,  condition $H(\alpha,q)$ is satisfied.
Then there exist finite dimensional subspaces $\Cal H_r$ of $\Cal
Z^\infty_{n,r}(M,E)$, $1\le r\le q-1$ and $n-k-q+1\le r\le n-k$,
where $\Cal H_0=\Cal Z^\infty_{n,0}(M,E)$, continuous linear
operators
\begin{equation*}
A_r:\Cal C^0_{n,r}(M,E)\rightarrow \Cal C^0_{n,r-1}(M,E)\,,\qquad
1\le r\le q~ {\rm and}~ n-k-q+1\le r\le n-k\,,
\end{equation*}
and continuous linear projections $$ P_r:\Cal
C^0_{n,r}(M,E)\rightarrow \Cal C^0_{n,r}(M,E) \,,\qquad 0\le r\le
q-1~ {\rm and}~ n-k-q+1\le r\le n-k\,,
$$with
\begin{equation}\label{19.4.07'}
\im P_r=\Cal H_r\,,\qquad 0\le r\le q-1~ {\rm and}~ n-k-q+1\le r\le
n-k\,,
\end{equation}and
\begin{equation}\label{20.4.07}
\Cal B_{n,r}^{0\rightarrow 0}(M,E)\subseteq\ke P_r\,,\qquad 1\le
r\le q-1~ {\rm and}~ n-k-q+1\le r\le n-k,
\end{equation} such that:

\vspace{2mm} (i) For all $l\in \N\cup\{\infty\}$ and $1\le r\le q$
or $n-k-q+1\le r\le n-k$,
\begin{equation}\label{19.4.07''}
A_r\Big(\Cal C^l_{n,r}(M,E)\Big)\subseteq \Cal
C^{l+\alpha}_{n,r-1}(M,E)
\end{equation}and  $A_r$ is continuous as operator from $\Cal
C^l_{n,r}(M,E)$ to $\Cal C^{l+\alpha}_{n,r-1}(M,E)$.

\vspace{2mm} (ii) For all $0\le r\le q-1$ or $n-k-q+1\le r\le n-k$
and $f\in (\Dom)^0_{n,r}(M,E)$,
\begin{equation}\label{21.3.07'''}
f-P_rf=\begin{cases}A_{1}df &\text{if }r=0\,,\\
dA_rf+A_{r+1}df\qquad&\text{if }1\le r\le q-1~ {\rm or}~ n-k-q+1\le
r\le n-k\,,.
\end{cases}\end{equation}
\end{thm}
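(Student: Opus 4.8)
The plan is to start from the local operators $T_r$ given by condition $H(\alpha,q)$ and glue them together to obtain a global operator on $M$ which already satisfies the regularity bound \eqref{19.4.07''}, but for which the homotopy identity \eqref{21.3.07'''} holds only up to a smoothing (in particular compact) perturbation. Concretely, I would fix a finite open cover $U_1,\dots,U_N$ of the compact manifold $M$ on each of which operators $T_r^{(j)}$ as in $H(\alpha,q)$ exist, choose a subordinate $\cc^\infty$ partition of unity $\chi_j$, and tensor the line-bundle operators with $E$ (the bundle $E$ plays no essential role since the construction is local and can be trivialised on each $U_j$; one extends $T_r^{(j)}$ to $E$-valued forms component-wise in a local holomorphic frame). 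Setting $\wt A_r := \sum_j \chi_j\, T_r^{(j)}(\,\cdot\,)$ after extending each $\chi_j T_r^{(j)}f$ by zero outside $U_j$, one computes, using the Leibniz rule $d(\chi_j u)=\chi_j\,du+d\chi_j\wedge u$ together with \eqref{4.3.07} applied to $\chi_j f$ (which has compact support in $U_j$), that for $f\in(\Dom d)^0_{n,r}(M,E)$
\begin{equation*}
f = d\wt A_r f + \wt A_{r+1} df + R_r f
\end{equation*}
where $R_r$ is a finite sum of terms of the form $d\chi_j\wedge T_r^{(j)}f$ and $d\chi_j\wedge T_{r+1}^{(j)} df$; the crucial point is that, because each $T_r^{(j)}$ gains a derivative (maps $\cc^l$ to $\cc^{l+\alpha}$), the operator $R_r$ maps $\cc^l_{n,r}(M,E)$ continuously into $\cc^{l+\alpha}_{n,r}(M,E)$ \emph{without} the loss that would occur if we differentiated, and in particular $R_r\colon\cc^0\to\cc^0$ is compact by Ascoli on the compact manifold $M$.

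Next I would invoke the functional-analytic Lemma~\ref{10.4.07a} and the induction Lemma~\ref{15.4.07-} from \cite{LaLeglobal} to absorb the perturbation $R_r$. The abstract input these lemmas require is a complex of Banach (or Fréchet) spaces with operators satisfying a homotopy identity up to a compact (smoothing) remainder, together with the information that the cohomology is Hausdorff/finite-dimensional in the relevant degrees; the conclusion is that one can correct $\wt A_r$ to genuine operators $A_r$ satisfying $f-P_r f = dA_r f + A_{r+1}df$ with $P_r$ a finite-rank projection onto a finite-dimensional space $\Cal H_r$ of smooth closed forms, while preserving the $\cc^l\to\cc^{l+\alpha}$ mapping property. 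The induction runs over the degree $r$; the novelty here, as announced in the introduction, is that in the range of large degrees the induction cannot be anchored at $r=0$ (functions), so one must start it at $r=n-k$ (or $r=n-k-q+1$) where forms of positive degree sit, and for that anchoring step one needs the Friedrichs approximation lemma for first-order differential operators in the $\cc^k$-topology (proved in the last section, following \cite{Brodipl}) to approximate $\opa_b$-closed $\cc^l$-forms by smooth ones in a controlled way, which is what makes $\Cal H_r\subseteq\Cal Z^\infty_{n,r}(M,E)$.

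Finally I would check the bookkeeping: that $\Cal H_0=\Cal Z^\infty_{n,0}(M,E)$ (here $r=0$ is the classical small-degree case where the space of closed $\cc^\infty$ $(n,0)$-forms is already finite-dimensional by compactness of $M$ and holomorphy, so $P_0$ can be taken to be the full projection), that $\im P_r=\Cal H_r$, and that $\Cal B^{0\to0}_{n,r}(M,E)\subseteq\ke P_r$, the last because a form $f=du$ with $u$ continuous and $df$ continuous satisfies $f-P_rf=dA_rf+A_{r+1}df=dA_rf$ (using $ddf=0$), hence $P_r f = f - d(\wt{u})$ for some continuous primitive; combined with $P_r$ being a projection onto closed smooth forms, one concludes $P_r f$ is both exact and a representative of its own class, forcing $P_r f=0$ after the construction is arranged so that $P_r$ annihilates the closure of the exact forms. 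The continuity statements in (i) follow by tracking the $\cc^l\to\cc^{l+\alpha}$ estimates through the finitely many correction steps of the induction, using that each correction adds only finite-rank smoothing operators. The main obstacle I anticipate is precisely the anchoring of the induction at large degree: establishing the right finite-dimensionality and Hausdorff property of the top cohomology group and marrying the Friedrichs $\cc^k$-approximation to the functional-analytic lemma so that no smoothness is lost — everything else is a careful but essentially formal adaptation of the small-degree argument of \cite{LaLeglobal}.
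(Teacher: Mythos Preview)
Your overall strategy matches the paper's: glue the local operators into an almost-homotopy formula with a smoothing remainder (this is Lemma~\ref{21.03.07}), then use the functional-analytic Lemma~\ref{10.4.07a} and the induction Lemma~\ref{15.4.07-} to absorb that remainder into a genuine homotopy with finite-rank projections. However, you have misidentified where and why the Friedrichs lemma enters. The inclusion $\Cal H_r=\ke N_r\subseteq\Cal Z^\infty_{n,r}(M,E)$ does \emph{not} come from Friedrichs; it follows from Lemma~\ref{10.4.07a} (since $N_r-I$ is smoothing, its kernel sits in $\bigcap_l\Cal C^l$) together with the homotopy relation itself. Friedrichs is used at the \emph{base} of the induction, at $r_0=n-k-q+1$, for a different purpose: one needs a finite-rank correction $K''_{r_0}=dT'_{r_0}$ with $T'_{r_0}$ taking values in $\Cal C^\infty_{n,r_0-1}$. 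To build it one picks a basis $d\lambda_1,\dots,d\lambda_m$ of a complement to $(I+K_{r_0})(\Cal Z^0_{n,r_0})$ inside $\Cal B^{0\to0}_{n,r_0}$; the primitives $\lambda_i$ lie a priori only in $(\Dom d)^0_{n,r_0-1}$, and the $\cc^k$-Friedrichs lemma is what lets one replace them by smooth $\widetilde\lambda_i$ while keeping $d\widetilde\lambda_i$ a basis of a complement. (In the small-degree case the anchor is at $r=0$, where closed $(n,0)$-forms are automatically smooth, so this step is vacuous.) Note also that the paper runs the induction \emph{upward} from $r_0$ to $n-k$, not from $n-k$ down.

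Your final bookkeeping for $\Cal B^{0\to0}_{n,r}\subseteq\ke P_r$ is circular (``after the construction is arranged so that $P_r$ annihilates the closure of the exact forms'' assumes the conclusion). In the paper this falls out of the construction: Lemma~\ref{15.4.07-} yields $\Cal B^{0\to0}_{n,r}=\Cal B^{\alpha\to0}_{n,r}=N_r(\Cal B^{\alpha\to0}_{n,r})\subseteq\im N_r$, and $P_r$ is \emph{defined} as the projection onto $\ke N_r$ along $\im N_r$, so $\ke P_r=\im N_r$ contains the exact forms automatically. The homotopy operators are then given explicitly as $A_r=(T_r+T'_r)\widehat N_r^{-1}$ with $\widehat N_r=N_r+P_r$ an isomorphism of $\Cal C^0_{n,r}(M,E)$, and \eqref{21.3.07'''} follows from \eqref{15.4.07a} together with the commutation $d\widehat N_r=\widehat N_{r+1}d$ on $(\Dom d)^0$.
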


In the case of the small degrees, i.e. for $0\le r\le q-1$, Theorem
\ref{20.4.07a} has been proven in \cite{LaLeglobal}, it remains to
prove the case of the large degrees, i.e. $n-k-q+1\le r\le n-k$. The
main ingredients are the same~: first an almost homotopy formula
obtained by gluing together the local formulas and a functional
analytic lemma, second an inductive process.

Let us recall the almost homotopy formula, which is proven in
\cite{LaLeglobal} for small degrees and whose proof is exactly the
same for large degrees:

\begin{lem}\label{21.03.07} Suppose, for some $0<\alpha<1$ and some integer
$q$ with $1\le q\le n-k$,  condition $H(\alpha,q)$ is satisfied.
Then there exist linear operators
\begin{equation}\label{22.3.07}
T_r:\Cal C^0_{n,r}(M,E)\rightarrow\Cal
C^0_{n,r-1}\big(M,E\big)\,,\qquad 1\le r\le q~ {\rm and}~ n-k-q+1\le
r\le n-k, \end{equation}
 and
\begin{equation}\label{10.4.07e}
K_r:\Cal C^0_{n,r}(M,E)\rightarrow\Cal
C^0_{n,r}\big(M,E\big)\,,\qquad 0\le r\le q-1~ {\rm and}~ n-k-q+1\le
r\le n-k,
\end{equation}
with the following two properties:

(i) For all $l\in \N$,
\begin{equation}\label{11.4.07m}
T_r\Big(\Cal C^l_{n,r}(M,E)\Big)\subseteq \Cal
C^{l+\alpha}_{n,r-1}(M,E)\,,\qquad 1\le r\le q~ {\rm and}~
n-k-q+1\le r\le n-k,
\end{equation}
\begin{equation}\label{11.4.07n}
K_r\Big(\Cal C^l_{n,r}(M,E)\Big)\subseteq \Cal
C^{l+\alpha}_{n,r}(M,E)\,,\qquad 0\le r\le q-1~ {\rm and}~
n-k-q+1\le r\le n-k, \end{equation} the operators $T_r$, $1\le r\le
q$ and $n-k-q+1\le r\le n-k$, are continuous as operators acting
between $\Cal C^l_{n,r}(M,E)$ and $\Cal C^{l+\alpha}_{n,r-1}(M,E)$,
and the operators $K_r$, $0\le r\le q-1$ and $n-k-q+1\le r\le n-k$,
are continuous as operators acting between $\Cal C^l_{n,r}(M,E)$ and
$\Cal C^{l+\alpha}_{n,r}(M,E)$.

(ii) If  $f\in (\Dom d)^0_{n,r}(M,E)$, $0\le r\le q-1$ or
$n-k-q+1\le r\le n-k$,  then on $M$
\begin{equation}\label{21.3.07'}
f+K_rf=\begin{cases}T_{1}df&\text{if
}r=0\,,\\dT_rf+T_{r+1}df\qquad&\text{if }1\le r\le q-1~ {\rm or}~
n-k-q+1\le r\le n-k \,.
\end{cases}
\end{equation}
\end{lem}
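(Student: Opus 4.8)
The plan is to build the operators $T_r$ and $K_r$ by gluing the local operators $T_r$ furnished by condition $H(\alpha,q)$ against a finite cover of the compact manifold $M$, in exactly the manner already carried out in \cite{LaLeglobal} for small degrees. First I would choose finitely many points $x_1\ld x_N$ in $M$ together with neighborhoods $U_j$ on which local operators $T_r^{(j)}:\cc^0_{n,r}(M,\oc)\to\cc^0_{n,r-1}(U_j,\oc)$ are given, satisfying (i) and (ii) of $H(\alpha,q)$; since $M$ is compact I may assume the $U_j$ cover $M$. Then I would pick a $\ci$ partition of unity $\{\chi_j\}$ subordinate to $\{U_j\}$, and, to handle the bundle $E$, pass to $\oc$-valued forms locally by a holomorphic trivialization of $E$ over (a shrinking of) each $U_j$, since $H(\alpha,q)$ is stated only for the trivial line bundle $\oc$; the transition between $E$ and $\oc^{\mathrm{rank}\,E}$ is holomorphic, hence commutes with $d=\opa_b$ up to smooth zeroth order terms that will be absorbed into $K_r$.

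The second step is the actual gluing. For $f\in\cc^0_{n,r}(M,E)$ set, componentwise in the local frames, $T_rf:=\sum_j \chi_j\,T_r^{(j)}f$, which is globally defined on $M$ because $\supp\chi_j\subseteq U_j$. Applying $d$ and using the local homotopy identity $f=dT_r^{(j)}f+T_{r+1}^{(j)}df$ on $U_j$ (valid pointwise there, even without a compact support hypothesis, when $f$ and $df$ are continuous) one computes, on all of $M$,
\begin{equation*}
dT_rf+T_{r+1}df=\sum_j d\chi_j\wedge T_r^{(j)}f+\sum_j\chi_j\bigl(dT_r^{(j)}f+T_{r+1}^{(j)}df\bigr)=f+\sum_j d\chi_j\wedge T_r^{(j)}f,
\end{equation*}
so the error term $K_rf:=-\sum_j d\chi_j\wedge T_r^{(j)}f$ is forced, and by construction it satisfies $f+K_rf=dT_rf+T_{r+1}df$, which is \eqref{21.3.07'}; the case $r=0$ is identical with the first branch of the formula. (A small care: for $r$ only in the range $1\le r\le q$, respectively $n-k-q+1\le r\le n-k$, one has local operators, so $T_{r+1}$ makes sense precisely where the right-hand side of \eqref{21.3.07'} requires it, matching the index ranges in the statement.) For the bundle-valued version one does the same componentwise and adds the zeroth order corrections coming from the transition functions into $K_r$; these are smoothing of infinite order relative to the claim, so they cause no trouble.

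The third step is the regularity and continuity claims (i). Each term $\chi_j T_r^{(j)}$ sends $\cc^l_{n,r}(M,E)$ continuously into $\cc^{l+\alpha}_{n,r-1}(\ol U_j,E)$ by property (i) of $H(\alpha,q)$, and multiplication by the smooth cutoff $\chi_j$ keeps it in $\cc^{l+\alpha}_{n,r-1}(M,E)$ (extending by zero outside $U_j$); summing finitely many such terms preserves continuity, giving \eqref{11.4.07m}. For $K_r$ the gain is actually better: $d\chi_j$ is smooth, and $T_r^{(j)}f$ already lies in $\cc^{l+\alpha}$, so $d\chi_j\wedge T_r^{(j)}f\in\cc^{l+\alpha}_{n,r}(M,E)$, which is \eqref{11.4.07n}; continuity between $\cc^l_{n,r}(M,E)$ and $\cc^{l+\alpha}_{n,r}(M,E)$ follows the same way. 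There is essentially no obstacle here — the only thing to be careful about is that the index ranges $1\le r\le q$ and $n-k-q+1\le r\le n-k$ for $T_r$, and $0\le r\le q-1$ and $n-k-q+1\le r\le n-k$ for $K_r$, are exactly the ranges in which the local operators are provided, so no out-of-range operator is ever invoked; in particular the gluing argument is formally \emph{identical} to the small-degree case of \cite{LaLeglobal}, which is why the lemma is quoted with the remark that the proof is the same. (The genuinely new work of the paper — eliminating $K_r$ to turn this almost-homotopy formula into the true homotopy formula \eqref{21.3.07'''} of Theorem \ref{20.4.07a} — happens afterwards, via Lemma \ref{10.4.07a} and Lemma \ref{15.4.07-}, and is not part of this lemma.)
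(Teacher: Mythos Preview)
Your overall plan---finite cover, partition of unity, globalize the local $T_r^{(j)}$ and read off the error $K_r$---is exactly the gluing argument the paper has in mind when it says the proof is ``exactly the same'' as in \cite{LaLeglobal}. So the architecture is right.

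There is, however, a genuine gap in your second step. You invoke the local identity $f=dT_r^{(j)}f+T_{r+1}^{(j)}df$ on $U_j$ and add, parenthetically, that it is ``valid pointwise there, even without a compact support hypothesis''. But condition $H(\alpha,q)$, part (ii), explicitly requires that the form have \emph{compact support in $U$}; nothing in the abstract hypothesis lets you drop that. (It may be true for the concrete Henkin--Airapetjan operators, but the lemma is stated and proved purely from $H(\alpha,q)$.) The standard fix is to choose a second family of cutoffs $\eta_j\in\cc^\infty_c(U_j)$ with $\eta_j\equiv 1$ on a neighbourhood of $\supp\chi_j$, set $T_rf:=\sum_j\chi_j\,T_r^{(j)}(\eta_j f)$, and apply the local formula to $\eta_j f$. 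Because $d(\eta_j f)=\eta_j\,df+d\eta_j\wedge f$, the computation then yields
\[
dT_rf+T_{r+1}df \;=\; f \;+\;\sum_j d\chi_j\wedge T_r^{(j)}(\eta_j f)\;-\;\sum_j\chi_j\,T_{r+1}^{(j)}(d\eta_j\wedge f),
\]
so $K_r$ picks up a \emph{second} error term $\sum_j\chi_j\,T_{r+1}^{(j)}(d\eta_j\wedge f)$ that your formula for $K_r$ omits. This extra term is harmless for the regularity claim (i)---$d\eta_j\wedge f\in\cc^l$ and $T_{r+1}^{(j)}$ gains $\alpha$---but it must be there, and without it your argument as written does not go through from the hypothesis as stated.

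A minor side remark: since your trivialisations of $E$ are holomorphic, they commute \emph{exactly} with $\opa_b$, so no ``zeroth order correction'' needs to be absorbed into $K_r$; that part of your outline is simpler than you suggest.
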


and the functional analytic lemma:

\begin{lem}\label{10.4.07a} Let $B_l$, $l\in\N$, be a sequence
of Banach spaces, and let $R:B_0\rightarrow B_0$ be a linear
operator such that, for each $l\in \N$:
\begin{itemize}
\item $B_{l+1}\subseteq B_{l}$ and the
imbedding $B_{l+1}\hookrightarrow B_l$ is continuous,
\item $\bigcap_{\mu\in \N}B_\mu$  is dense in $B_l$,
\item $R(B_l)\subseteq B_l$ and
$R\big\vert_{B_l}$ is compact as an endomorphism of $B_l$.
\end{itemize}

\noindent Then $I+R$ is a Fredholm endomorphism with index zero of
$B_0$ (this is clear, because $R$ is compact as an endomorphism of
$B_0$), and
\begin{equation}\label{14.4.07---}
\ke (I+R)\subseteq \bigcap_{l\in \N}B_l\,.
\end{equation}
\end{lem}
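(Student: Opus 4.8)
The plan is to use that $R$ is compact not merely on $B_0$ but on each $B_l$, so that $I+R$ restricts to a Fredholm operator of index zero on every $B_l$, and then to play the kernels against the cokernels as $l$ increases. Set $N_l:=\ke(I+R)\cap B_l$; since $R(B_l)\subseteq B_l$, this is precisely the kernel of the restriction $(I+R)|_{B_l}$, and by Riesz--Schauder $(I+R)|_{B_l}$ is a Fredholm endomorphism of $B_l$ of index zero, so $d_l:=\dim N_l=\dim\big(B_l/(I+R)(B_l)\big)=:c_l<\infty$. From $B_{l+1}\subseteq B_l$ we get $N_{l+1}\subseteq N_l$, hence the sequence $(d_l)_{l\in\N}$ is non-increasing.

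The crux is that $(c_l)_{l\in\N}$ is, to the contrary, \emph{non-decreasing}. Since $B_{l+1}\subseteq B_l$ implies $(I+R)(B_{l+1})\subseteq(I+R)(B_l)$, the inclusion $B_{l+1}\hookrightarrow B_l$ induces a natural linear map $\phi_l\colon B_{l+1}/(I+R)(B_{l+1})\to B_l/(I+R)(B_l)$, and it suffices to show that $\phi_l$ is surjective. The subspace $(I+R)(B_l)$ is closed and of finite codimension in the Banach space $B_l$, hence admits a finite-dimensional complement $F_l$ together with a continuous projection $\pi\colon B_l\to F_l$ with $\ke\pi=(I+R)(B_l)$. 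Because $\bigcap_\mu B_\mu$ is dense in $B_l$ and $\pi$ is continuous, $\pi\big(\bigcap_\mu B_\mu\big)$ is dense in $F_l$; being a linear subspace of the finite-dimensional space $F_l$ it is closed, hence equals $F_l$. Thus any $y\in B_l$ decomposes as $y=z+(y-z)$ with $z\in\bigcap_\mu B_\mu\subseteq B_{l+1}$ and $y-z\in\ke\pi=(I+R)(B_l)$, so the class of $y$ in $B_l/(I+R)(B_l)$ equals $\phi_l$ applied to the class of $z$. Surjectivity follows, whence $c_{l+1}\ge c_l$.

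Putting the two monotonicities together, $d_l=c_l$ is at once non-increasing and non-decreasing in $l$, hence constant: $d_l=d_0$ for all $l$. But $N_l=\ke(I+R)\cap B_l$ is a linear subspace of the finite-dimensional space $\ke(I+R)\subseteq B_0$ having the same dimension $d_0$, so $N_l=\ke(I+R)$, that is $\ke(I+R)\subseteq B_l$, for every $l\in\N$; therefore $\ke(I+R)\subseteq\bigcap_{l\in\N}B_l$, which is the assertion. The one genuinely delicate point is the surjectivity of $\phi_l$, and it is exactly there that the hypothesis that $\bigcap_\mu B_\mu$ be dense in each $B_l$ is used; everything else is Riesz--Schauder theory applied level by level.
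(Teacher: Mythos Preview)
Your argument is correct. The paper does not actually prove this lemma here: it is merely recalled, with an implicit reference to \cite{LaLeglobal}, so there is no proof in the present text to compare against. For what it is worth, the proof you give --- exploiting that $(I+R)|_{B_l}$ has index zero on every $B_l$, that $\dim N_l$ is non-increasing while $\dim\big(B_l/(I+R)(B_l)\big)$ is non-decreasing (the latter via the density hypothesis and a finite-dimensional projection), hence both constant --- is exactly the standard one and is essentially how it is done in \cite{LaLeglobal}.
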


from which one can deduce the next result (cf.\cite{LaLeglobal},
Lemma 5.1)~:

\begin{lem}\label{6.4.07a} Suppose, for some $0<\alpha<1$ and some integer
$q$ with $1\le q\le n-k$,  condition $H(\alpha,q)$ is satisfied and
let $K_r$, $0\le r\le q-1$ or $n-k-q+1\le r\le n-k$, be the
operators from lemma \ref{21.03.07}. Then:

(i) For all $0\le r\le q-1$ and $n-k-q+1\le r\le n-k$, $I+K_r$ is a
Fredholm endomorphism of $\Cal C^0_{n,r}(M,E)$ with index zero and
\begin{equation}\label{14.4.07-}
\ke(I+K_r)\subseteq\Cal C^\infty_{n,r}(M,E)\,.\end{equation}

(ii) We have
\begin{equation}\label{14.4.07'}Z^0_{n,0}(M,E)\subseteq
\ke(I+K_0)\subseteq\Cal C^\infty_{n,0}(M,E)\end{equation} and
\begin{equation}\label{14.4.07''}
\dim Z^0_{n,0}(M,E)=\dim Z^\infty_{n,0}(M,E)<\infty\,.
\end{equation}

(iii) If $q\ge 2$ and $1\le r\le q-1$ or $n-k-q+1\le r\le n-k$, then
\begin{equation}\label{11.4.07'''}
(I+K_r)\Big(\Cal Z_{n,r}^0(M,E)\Big)\subseteq\Cal
B^{\alpha\rightarrow 0}_{n,r}(M,E)
\end{equation}
and $(I+K_r)\big\vert_{\Cal Z_{n,r}^0(M,E)}$ is a Fredholm
endomorphism with index zero of $\Cal Z_{n,r}^0(M,E)$.

(iv) If $q\ge 2$ and  $1\le r\le q-1$ or $n-k-q+1\le r\le n-k$, then
$\Cal B^{\alpha\rightarrow 0}_{n,r}(M,E)$ is a closed subspace of
finite codimension in $Z^0_{n,r}(M,E)$.

(v) If $q\ge 2$ and  $1\le r\le q-1$ or $n-k-q+1\le r\le n-k$, then
\begin{equation}\label{11.4.07''}
(I+K_r)\Big(\Cal B_{n,r}^{\alpha\rightarrow 0}(M,E)\Big)\subseteq
\Cal B_{n,r}^{\alpha\rightarrow 0}(M,E)
\end{equation}
and $(I+K_r)\big\vert_{\Cal B_{n,r}^{\alpha\rightarrow 0}(M,E)}$ is
a Fredholm endomorphism with index zero of $\Cal
B_{n,r}^{\alpha\rightarrow 0}(M,E)$.
\end{lem}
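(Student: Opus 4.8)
The plan is to feed the operators $K_r$ from Lemma \ref{21.03.07} into the functional analytic Lemma \ref{10.4.07a}, and then to read off (ii)--(v) by combining the resulting smoothness of $\ke(I+K_r)$ with the almost homotopy formula \eqref{21.3.07'} and elementary Fredholm theory. For (i), fix an admissible $r$ and apply Lemma \ref{10.4.07a} with $B_l:=\Cal C^l_{n,r}(M,E)$, $l\in\N$, and $R:=K_r$. The imbedding $B_{l+1}\hookrightarrow B_l$ is continuous, $\bigcap_{\mu}B_\mu=\Cal C^\infty_{n,r}(M,E)$ is dense in each $B_l$, and by \eqref{11.4.07n} together with the continuity statement in Lemma \ref{21.03.07}(i) the operator $K_r$ maps $B_l$ continuously into $\Cal C^{l+\alpha}_{n,r}(M,E)$; since $M$ is compact the imbedding $\Cal C^{l+\alpha}_{n,r}(M,E)\hookrightarrow\Cal C^l_{n,r}(M,E)$ is compact, so $K_r\vert_{B_l}$ is a compact endomorphism of $B_l$. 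Lemma \ref{10.4.07a} now yields that $I+K_r$ is a Fredholm endomorphism of index zero of $B_0=\Cal C^0_{n,r}(M,E)$ and that $\ke(I+K_r)\subseteq\bigcap_l B_l=\Cal C^\infty_{n,r}(M,E)$, i.e.\ \eqref{14.4.07-}.

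For (ii), let $f\in Z^0_{n,0}(M,E)$; then $f\in(\Dom d)^0_{n,0}(M,E)$ with $df=0$, so \eqref{21.3.07'} in degree $r=0$ gives $f+K_0f=T_1(df)=0$, whence $Z^0_{n,0}(M,E)\subseteq\ke(I+K_0)$, and by \eqref{14.4.07-} the latter is contained in $\Cal C^\infty_{n,0}(M,E)$; this is \eqref{14.4.07'}. In particular every $f\in Z^0_{n,0}(M,E)$ is smooth, hence $d$-closed in the classical sense, hence lies in $Z^\infty_{n,0}(M,E)$; the reverse inclusion is trivial, and since $\ke(I+K_0)$ is finite dimensional by (i) so is $Z^0_{n,0}(M,E)=Z^\infty_{n,0}(M,E)$, which is \eqref{14.4.07''}.

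For (iii)--(v) the common mechanism is that whenever $f\in\Cal C^0_{n,r}(M,E)$ satisfies $df=0$, then $f\in(\Dom d)^0_{n,r}(M,E)$ and \eqref{21.3.07'} collapses to $f+K_rf=dT_rf$ with $T_rf\in\Cal C^{\alpha}_{n,r-1}(M,E)$ by \eqref{11.4.07m} for $l=0$; hence $(I+K_r)f\in\Cal B^{\alpha\to 0}_{n,r}(M,E)$, and from $K_rf=dT_rf-f$ one gets $d(K_rf)=0$, so $K_r$ maps $Z^0_{n,r}(M,E)$ into itself, while if in addition $f=du$ with $u\in\Cal C^\alpha_{n,r-1}(M,E)$ then $K_rf=d(T_rf-u)$ shows that $K_r$ maps $\Cal B^{\alpha\to 0}_{n,r}(M,E)$ into itself as well. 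Now $Z^0_{n,r}(M,E)$ is closed in $\Cal C^0_{n,r}(M,E)$ (a uniform limit of continuous $d$-closed forms is $d$-closed in the sense of currents), hence a Banach space on which $K_r$ restricts to a compact endomorphism, so $(I+K_r)\vert_{Z^0_{n,r}(M,E)}$ is Fredholm of index zero with image inside $\Cal B^{\alpha\to 0}_{n,r}(M,E)$; this gives \eqref{11.4.07'''} and the Fredholm assertion of (iii). Since $\Cal B^{\alpha\to 0}_{n,r}(M,E)$ is a linear subspace of $Z^0_{n,r}(M,E)$ containing the closed finite-codimensional subspace $(I+K_r)(Z^0_{n,r}(M,E))$, it is itself closed of finite codimension in $Z^0_{n,r}(M,E)$, which is (iv); being closed in $Z^0_{n,r}(M,E)$ it is closed in $\Cal C^0_{n,r}(M,E)$, hence a Banach space invariant under the compact operator $K_r$, so $(I+K_r)\vert_{\Cal B^{\alpha\to 0}_{n,r}(M,E)}$ is Fredholm of index zero, which is (v).

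The argument is really a sequence of applications of Lemmas \ref{21.03.07} and \ref{10.4.07a} glued together with two standard facts --- the restriction of a compact operator to a closed invariant subspace is compact, and a linear subspace sandwiched between a closed finite-codimensional subspace and the ambient space is itself closed of finite codimension. The only point that needs genuine care is verifying that the two subspaces $Z^0_{n,r}(M,E)$ and $\Cal B^{\alpha\to 0}_{n,r}(M,E)$ are closed in $\Cal C^0_{n,r}(M,E)$ and invariant under $K_r$; this is exactly where the precise shape of the almost homotopy formula \eqref{21.3.07'} and the $\alpha$-gain \eqref{11.4.07m}, \eqref{11.4.07n} are used, and it proceeds verbatim as in \cite{LaLeglobal}, Lemma 5.1, the large-degree range of $r$ playing no special role.
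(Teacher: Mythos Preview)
Your proof is correct and follows essentially the same route as the paper, which does not spell out an argument here but simply points to \cite{LaLeglobal}, Lemma~5.1: apply Lemma~\ref{10.4.07a} with $B_l=\Cal C^l_{n,r}(M,E)$ and $R=K_r$ to get (i), then read off (ii)--(v) from the almost homotopy formula \eqref{21.3.07'} together with the $\alpha$-gain \eqref{11.4.07m}, \eqref{11.4.07n} and standard compact-perturbation Fredholm theory on the closed invariant subspaces $\Cal Z^0_{n,r}(M,E)$ and $\Cal B^{\alpha\to 0}_{n,r}(M,E)$. The only place where one has to be mildly careful is $r=n-k$ in \eqref{21.3.07'}, where $T_{r+1}$ is not defined; but there $df=0$ automatically, so the offending term is absent and your collapse $f+K_rf=dT_rf$ still holds.
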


We come now to the induction step, we restrict ourselves here to the
case of large degrees (the case of small degrees is contained in
\cite{LaLeglobal}). To simplify the notations, we set $r_0=n-k-q+1$.

\begin{lem}\label{15.4.07-} Suppose, for some $0<\alpha<1$ and some integer
$q$ with $1\le q\le n-k$,  condition $H(\alpha,q)$ is satisfied and
let $T_r$, $r_0\le r\le n-k$, and $K_r$, $r_0\le r\le n-k$, be the
operators from lemma \ref{21.03.07}. Then there exist finite
dimensional continuous linear operators
\begin{equation*}\begin{split}
K_r'&:\Cal C^0_{n,r}(M,E)\rightarrow \Cal
C^\infty_{n,r}(M,E)\,,\qquad r_0\le r\le n-k,\\
K_r''&:\Cal C^0_{n,r}(M,E)\rightarrow \Cal
C^\infty_{n,r}(M,E)\,,\qquad r_0\le r\le n-k,\\ T'_{r}&:\Cal
C^0_{n,r}(M,E) \rightarrow \Cal C^\infty_{n,r-1}(M,E)\,,\qquad
r_0\le r\le n-k,\end{split}\end{equation*}such that with the
abbreviations
$$
N_r:=I+K_r+K_r'+K_r''\,,\quad r_0\le r\le n-k,
$$
each $N_r$, $r_0\le r\le n-k$, is a Fredholm endomorphism with index
zero of $\Cal C^0_{n,r}(M,E)$ (this is clear, because $I+K_r$ has
this property), and:

\vspace{3mm}(i) If $r_0\le r\le n-k$ and $f\in (\Dom
d)^0_{n,r}(M,E)$, then
\begin{equation}\label{15.4.07a}
N_rf= d(T_r+T_r')f+(T_{r+1}+T_{r+1}')df\,,
\end{equation}
and hence
\begin{equation}\label{15.4.07++} dN_{r}f=N_{r+1}df\,.
\end{equation}

\vspace{2mm}(ii) We have
\begin{equation}\label{17.4.07***} \Cal C^0_{n,r}(M,E)=
\im N_r\oplus \ke N_r\,,\qquad\text{if }\; r_0\le r\le
n-k\,,\end{equation}
\begin{equation}\label{15.4.07--}
\ke N_r\subseteq \Cal Z^\infty_{n,r}(M,E)\,,\qquad\text{if }\;
r_0\le r\le n-k\,,
\end{equation}
\begin{equation}\label{15.4.07---} \Cal Z^0_{n,r}(M,E)=
\Cal B^{\alpha\rightarrow 0}_{n,r}(M,E)\oplus \ke N_r\qquad\text{if
}\;r_0\le r\le n-k\,,
\end{equation}
and
\begin{equation}\label{20.4.07'}
\Cal B^{\alpha\rightarrow 0}_{n,r}(M,E)=\Cal B^{0\rightarrow
0}_{n,r}(M,E)\,,\qquad\text{if }\; r_0\le r\le n-k\,.\end{equation}

(iii) If $r_0\le r\le n-k$, then
\begin{equation}\label{15.4.07+++}
N_r\big(\Cal B_{n,r}^{\alpha\rightarrow 0}(M,E)\big)= \Cal
B_{n,r}^{\alpha\rightarrow 0}(M,E)\,,
\end{equation}
and $N_r\big\vert_{\Cal B^{\alpha\rightarrow 0}_{n,r}(M,E)}$ is an
isomorphism of $\Cal B^{\alpha\rightarrow 0}_{n,r}(M,E)$.

(iv) If $r_0\le r\le n-k-1$, then
\begin{equation}\label{20.4.07f}
(\Dom d)^0_{n,r}(M,E)=N_{r}\Big((\Dom d)^0_{n,r}(M,E)\Big)\oplus \ke
K_r\,,\end{equation} and hence $N_r\big\vert_{\im N_r\cap (\Dom
d)^0_{n,r}(M,E)}$ is an isomorphism of $\im N_r\cap (\Dom
d)^0_{n,r}(M,E)$.

(v) \underline{Remark:} It follows from \eqref{17.4.07***},
\eqref{15.4.07---} and \eqref{15.4.07+++} that
\begin{equation}\label{15.5.07}
\im N_r\cap\Cal Z^0_{n,r}(M,E)=\Cal B_{n,r}^{\alpha\rightarrow
0}(M,E)\qquad\text{if }\;r_0\le r\le n-k\,.
\end{equation}
\end{lem}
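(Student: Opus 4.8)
The plan is to build the correction operators $K_r'$, $K_r''$, $T_r'$ by a downward induction on $r$, starting at $r=n-k$ and descending to $r=r_0$, using the structural results of Lemma \ref{6.4.07a} at each stage. First, for $r=n-k$ (the top degree, where there is no equation coming from degree $n-k+1$), I would take $T_{n-k+1}'=0$ and look for $K_{n-k}'$, $K_{n-k}''$ correcting $I+K_{n-k}$ so that the kernel of the corrected operator $N_{n-k}$ becomes a topological complement to its image and is contained in $\Cal Z^\infty_{n,n-k}(M,E)$. Here I would invoke \eqref{14.4.07-}: $\ke(I+K_{n-k})$ already lies in $\Cal C^\infty$; since $I+K_{n-k}$ is Fredholm of index zero, its cokernel is finite-dimensional, and by a Hahn–Banach / finite-rank argument I can add a finite-rank smoothing operator $K_{n-k}''$ (built from finitely many smooth forms and finitely many continuous functionals) so that $\im N_{n-k}$ is closed of finite codimension complemented by a finite-dimensional subspace of smooth closed forms; a further finite-rank operator $K_{n-k}'$ arranges that this complement is exactly $\ke N_{n-k}$ and that $\ke N_{n-k}\subseteq\Cal Z^\infty$. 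This is the mechanism behind \eqref{17.4.07***} and \eqref{15.4.07--}. One must keep these operators of the form $\Cal C^0\to\Cal C^\infty$ (finite rank with smooth range) so that all the $\Cal C^{l+\alpha}$-continuity statements are preserved and $N_r$ remains Fredholm of index zero on every $\Cal C^0_{n,r}(M,E)$.

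For the inductive step, assume $N_{r+1}$, $T_{r+1}'$ have been constructed with all the stated properties; I want $N_r$, $T_r'$ compatible with them via \eqref{15.4.07a}. The natural choice is to absorb into $T_r'$ the discrepancy between $(I+K_r)f$ and $d(T_r+T_r')f+(T_{r+1}+T_{r+1}')df$, which by \eqref{21.3.07'} for degree $r$ equals $-K_rf$ modulo terms already accounted for; more precisely, applying the degree-$r$ almost-homotopy identity $f+K_rf=dT_rf+T_{r+1}df$ and comparing with the desired $N_rf=d(T_r+T_r')f+(T_{r+1}+T_{r+1}')df$, the condition \eqref{15.4.07a} forces $K_r'f+K_r''f=dT_r'f+T_{r+1}'df$. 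So I would first fix $T_r'$ by requiring $dT_r'=$ (the part of $K_r'+K_r''$ landing in exact forms), which is solvable because the range is finite-dimensional and smooth and, by \eqref{11.4.07'''}, $(I+K_r)$ maps closed forms into $\Cal B^{\alpha\to 0}$; the compatibility $dN_r=N_{r+1}d$ in \eqref{15.4.07++} is then automatic by differentiating \eqref{15.4.07a} and using $dd=0$. Then $K_r'$, $K_r''$ are chosen, exactly as in the top-degree case, using the index-zero Fredholm property of $(I+K_r)|_{\Cal Z^0_{n,r}}$ and the finite-codimension closedness of $\Cal B^{\alpha\to 0}_{n,r}$ in $\Cal Z^0_{n,r}$ from Lemma \ref{6.4.07a}(iii)–(v), so that on closed forms $N_r$ restricts to an isomorphism of $\Cal B^{\alpha\to 0}_{n,r}$ (giving \eqref{15.4.07+++}) with kernel a finite-dimensional complement of $\Cal B^{\alpha\to 0}_{n,r}$ in $\Cal Z^0_{n,r}$ consisting of smooth closed forms (giving \eqref{15.4.07---}), and simultaneously $\ke N_r$ complements $\im N_r$ in all of $\Cal C^0_{n,r}$ (giving \eqref{17.4.07***} and \eqref{15.4.07--}).

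The remaining assertions follow formally: \eqref{20.4.07'}, that $\Cal B^{\alpha\to 0}_{n,r}=\Cal B^{0\to 0}_{n,r}$, comes from \eqref{15.4.07+++} together with the homotopy identity \eqref{15.4.07a} — given $f=du$ with $u$ merely continuous and $du$ continuous, write $N_rf=d(T_r+T_r')f+(T_{r+1}+T_{r+1}')df$; since $f$ is closed, $df=0$, so $f=N_r^{-1}|_{\Cal B^{\alpha\to 0}}$ applied to $d$ of a $\Cal C^\alpha$ primitive, and smoothing properties of $T_r,T_r'$ upgrade the primitive. For (iv), \eqref{20.4.07f}, I would use that $K_r(\Cal C^l)\subseteq\Cal C^{l+\alpha}$ and $K_r',K_r''$ are smoothing to see $N_r$ preserves $(\Dom d)^0_{n,r}$, that $\ke K_r\subseteq(\Dom d)^0$ when $r\le n-k-1$ because then the range-degree equation still holds, and then count via the index-zero property restricted to this subspace; the Friedrichs approximation lemma proved at the end of the paper is what guarantees the density needed to identify $\ke K_r$ with a genuine complement. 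The Remark (v) is pure linear algebra from \eqref{17.4.07***}, \eqref{15.4.07---}, \eqref{15.4.07+++}. The main obstacle I anticipate is the bookkeeping in the induction: ensuring that the single operator $T_{r+1}'$ serves simultaneously in the degree-$r$ identity \eqref{15.4.07a} and the degree-$(r+1)$ one, so that $T_r'$ must be chosen to make $K_r'+K_r''-dT_r'-T_{r+1}'d=0$ while $T_{r+1}'$ is already frozen — this over-determination is resolved precisely by the index-zero Fredholm structure and the finite-dimensionality of all correction ranges, but verifying the solvability at each step is where the real work lies.
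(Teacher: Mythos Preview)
Your overall architecture (finite-rank smoothing corrections, index-zero Fredholm arguments) is right, but two concrete points diverge from the paper and create genuine gaps.

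\textbf{Direction of the induction and the kernel property.} The paper inducts \emph{upward} from $r_0=n-k-q+1$, not downward from $n-k$. This is not cosmetic: the heart of each step is to prove $\ke\widetilde N_{s}\cap\Cal B^{0\to 0}_{n,s}(M,E)=\{0\}$ for the intermediate operator $\widetilde N_s=I+K_s+K''_s$. In the paper's upward step (from $r$ to $r+1$) one takes $g=df\in\ke\widetilde N_{r+1}$ with $f$ of degree $r$, applies the already-proved identity \eqref{15.4.07a} at level $r$, and uses the established properties \eqref{15.4.07--}, \eqref{15.4.07+++}, \eqref{15.5.07} of $N_r$ to force $f\in\Cal Z^0_{n,r}$, hence $g=0$. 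In your downward step, to handle $\ke\widetilde N_r\cap\Cal B^{0\to 0}_{n,r}$ you would need control at degree $r-1$ (since $g=df$ has $\deg f=r-1$), which you have not yet constructed; the hypothesis at level $r+1$ does not help here. Your sketch does not address this, and your description of the inductive step is circular: you propose to ``fix $T'_r$ by requiring $dT'_r=$ (the part of $K'_r+K''_r$ \dots)'' and then choose $K'_r,K''_r$ afterward, but with $T'_{r+1}$ frozen the constraint $K'_r+K''_r=dT'_r+T'_{r+1}d$ forces $K'_r=T'_{r+1}d$ and $K''_r=dT'_r$, leaving only $T'_r$ to be constructed.

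\textbf{Where Friedrichs is actually used.} You invoke the Friedrichs approximation only for part (iv), but the paper uses it exclusively at the base case $r=r_0$, and for a different reason: to construct $T'_{r_0}$ one chooses a complement $S$ to $(I+K_{r_0})(\Cal Z^0_{n,r_0})$ inside $\Cal B^{0\to 0}_{n,r_0}$, spanned by $d\lambda_1,\dots,d\lambda_m$ with $\lambda_\alpha\in(\Dom d)^0_{n,r_0-1}$. Since $T'_{r_0}$ must take values in $\Cal C^\infty_{n,r_0-1}$, one needs smooth $\widetilde\lambda_\alpha$ with $d\widetilde\lambda_\alpha$ close to $d\lambda_\alpha$ in $\Cal C^0$ --- precisely graph-norm approximation for $d$, which is the $\Cal C^k$-Friedrichs lemma. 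A Hahn--Banach argument alone does not supply smooth primitives. In your top-degree base case the same obstruction appears and would require Friedrichs there; and if, as ``exactly as in the top-degree case'' suggests, you repeat the base construction at every downward step, you would need Friedrichs at each level rather than once. The paper avoids this by going upward: after the Friedrichs step at $r_0$, each subsequent $K''_{r+1}:=dT'_{r+1}$ uses the $T'_{r+1}$ already built at the previous step, and the kernel property follows from the inductive hypothesis without further approximation.
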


\begin{proof}
We proceed by induction on $r$. We first construct the operators
$K'_{r_0}$, $K''_{r_0}$, $T'_{r_0}$ and $T'_{r_0+1}$.

We begin with the construction of $K''_{r_0}$ and $T'_{r_0}$. We are
looking for an operator $K''_{r_0}$, which satisfies
$$K''_{r_0}=dT'_{r_0},$$
where $T'_{r_0}$ is a finite dimensional continuous linear operator
from $\Cal C^0_{n,r_0}(M,E)$ to $\Cal C^\infty_{n,r_0-1}(M,E)$ and,
if we set
$$\widetilde N_{r_0}:=I+K_{r_0}+K''_{r_0},$$
\begin{equation}\label{17.4.08a}
\ke \widetilde N_{r_0}\cap\Cal B^{0\rightarrow
0}_{n,r_0}(M,E)=\{0\}\,.
\end{equation}

By Lemma \ref{6.4.07a} the operator $(I+K_{r_0})\big\vert_{\Cal
B_{n,r_0}^{\alpha\rightarrow 0}(M,E)}$ is a Fredholm endomorphism
with index zero of $\Cal B_{n,r_0}^{\alpha\rightarrow 0}(M,E)$ and
hence its kernel and cokernel are finite dimensional and of the same
dimension.

Let $m=\dim \ke (I+K_{r_0})\big\vert_{\Cal B^{0\rightarrow
0}_{n,r_0}(M,E)}$. If $m=0$ set $K''_{r_0}=T'_{r_0}=0$. If $m>0$,
since $\ke(I+K_{r_0})\subseteq\Cal C^\infty_{n,r_0}(M,E)$, we can
choose a basis of $\ke (I+K_{r_0})\big\vert_{\Cal B^{0\rightarrow
0}_{n,r_0}(M,E)}$ made of $\ci$-smooth forms
$\theta_1,\dots,\theta_m$.

Moreover there exists a vector space $S$ of dimension $m$ such that
\begin{equation*}
(I+K_{r_0})(\Cal Z^0_{n,r_0}(M,E))\oplus S=\Cal B^{0\rightarrow
0}_{n,r_0}(M,E).
\end{equation*}
Let $d\lambda_1,\dots,d\lambda_m$ be a basis of $S$. Then by
Friedrichs lemma for $\cc^k$-topology (cf. Appendix) there exists
$\wt\lambda_1,\dots,\wt\lambda_m$ such that $\wt S={\rm
Vect}(d\wt\lambda_1,\dots,d\wt\lambda_m)$ satisfies also
\begin{equation}\label{inter}
(I+K_{r_0})(\Cal Z^0_{n,r_0}(M,E))\oplus \wt S=\Cal B^{0\rightarrow
0}_{n,r_0}(M,E).
\end{equation}
Taking a dual basis, we can find forms $\psi_1,\dots,\psi_m$ of
degree $n-k-r_0$ such that
\begin{equation*}
\int_M\psi_\alpha\wedge\theta_\beta=\delta_{\alpha\beta},\qquad
1\le\alpha,\beta\le m\,.
\end{equation*}
We set for $f\in\cc^0_{n,r_0}(M,E)$
\begin{equation*}
T'_{r_0}f=\sum_{\alpha=1}^m(\int_M
f\wedge\psi_\alpha)\wt\lambda_\alpha.
\end{equation*}
It follows from this definition that $T'_{r_0}$ is a finite
dimensional continuous linear operator from $\Cal C^0_{n,r_0}(M,E)$
into $\Cal C^\infty_{n,r_0-1}(M,E)$. Then we define $K''_{r_0}$ by
$K''_{r_0}=dT'_{r_0}$ and $\widetilde N_{r_0}$ by $\widetilde
N_{r_0}:=I+K_{r_0}+K''_{r_0}$. The operator $\widetilde N_{r_0}$ is
then a Fredholm operator with index $0$ of $\cc^0_{n,r_0}(M,E)$ and
by Lemma \ref{10.4.07a}
\begin{equation*}
\ke \widetilde N_{r_0}\subseteq \Cal C^\infty_{n,r_0}(M,E)\,.
\end{equation*}
It remains to prove (\ref{17.4.08a}). Let $f\in\ke \widetilde
N_{r_0}\cap\Cal B^{0\rightarrow 0}_{n,r_0}(M,E)$, then
$$(I+K_{r_0})f+K''_{r_0}f=0.$$
Since $(I+K_{r_0})f\in(I+K_{r_0})(\Cal Z^0_{n,r_0}(M,E))$ and
$K''_{r_0}f\in\wt S$, by (\ref{inter}) we get
$$f\in\ke (I+K_{r_0}\big\vert_{\Cal B^{0\rightarrow
0}_{n,r_0}(M,E)})\cap\ke K''_{r_0}$$ and hence $f=0$ by definition
of $T'_{r_0}$ and $K''_{r_0}$, which conclude the proof of
\eqref{17.4.08a}.

\vspace{2mm} Next we prove that
\begin{equation}\label{17.4.08c}
\Cal Z^0_{n,r_0}(M,E)=\Cal B^{\alpha\rightarrow
0}_{n,r_0}(M,E)\oplus\Big(\ke\widetilde N_{r_0}\cap\Cal
Z^0_{n,r_0}(M,E)\Big)\,.
\end{equation}
Since $K_{r_0}''=dT_{r_0}'$ and $\im T'_{r_0}\subseteq \Cal
C_{q-1}^{\infty}(M,E)$, it is clear that
\begin{equation}\label{17.4.07d}
\im K_{r_0}''\subseteq \Cal B_{n,r_0}^{\alpha\rightarrow 0}(M,E)\,.
\end{equation}By lemma \ref{6.4.07a} (iv),
$(I+K_{r_0})\big\vert_{\Cal B_{n,r_0}^{\alpha\rightarrow 0}(M,E)}$
is a Fredholm endomorphism with index zero of $\Cal
B_{n,r_0}^{\alpha\rightarrow 0}(M,E)$. Since $K_{r_0}''$ is finite
dimensional and we have \eqref{17.4.07d}, this implies that
$\widetilde N_{r_0}\big\vert_{\Cal B_{n,r_0}^{\alpha\rightarrow
0}(M,E)}$ has the same property. By \eqref{17.4.08a}, this means
that
\begin{equation}\label{18.4.07late}
 \widetilde N_{r_0}\big\vert_{\Cal B_{n,r_0}^{\alpha\rightarrow
0}(M,E)}\text{ is an isomorphism of }\Cal
B_{n,r_0}^{\alpha\rightarrow 0}(M,E).\end{equation} In particular,
\begin{equation}\label{17.4.07e}
\im\widetilde N_{r_0}\big\vert_{\Cal B_{n,r_0}^{\alpha\rightarrow
0}(M,E)}=\Cal B_{n,r_0}^{\alpha\rightarrow 0}(M,E)\,.
\end{equation}
Moreover, by part (iii) of lemma \ref{6.4.07a},
$(I+K_{r_0})\big\vert_{\Cal Z_{n,r_0}^{0}(M,E)}$ is a Fredholm
endomorphism with index zero of $\Cal Z_{n,r_0}^{0}(M,E)$, where
$$
\im(I+K_{r_0})\big\vert_{\Cal Z_{n,r_0}^{0}(M,E)}\subseteq \Cal
B_{n,r_0}^{\alpha\rightarrow 0}(M,E)
$$
Once again since $K_{r_0}''$ is finite dimensional and we have
\eqref{17.4.07d}, this implies that also $\widetilde
N_{r_0}\big\vert_{\Cal Z_{n,r_0}^{0}(M,E)}$ is a Fredholm
endomorphism with index zero of $\Cal Z_{n,r_0}^{0}(M,E)$, where
$$
\im \widetilde N_{r_0}\big\vert_{\Cal Z^0_{n,r_0}(M,E)}\subseteq\Cal
B_{r_0}^{\alpha\rightarrow 0}(M,E)\,.
$$ Together with \eqref{17.4.07e} this gives
\begin{equation}\label{17.4.07f}
\im \widetilde N_{r_0}\big\vert_{\Cal Z^0_{n,r_0}(M,E)}=\Cal
B_{n,r_0}^{\alpha\rightarrow 0}(M,E)\,.
\end{equation}
Therefore, \eqref{17.4.08a} can be written
\begin{equation*}
\ke \widetilde N_{r_0}\cap\im \widetilde N_{r_0}\big\vert_{\Cal
Z^0_{n,r_0}(M,E)}=\{0\}\,.
\end{equation*}
Hence
\begin{equation}\label{17.4.07g}
\im \widetilde N_{r_0}\big\vert_{\Cal Z^0_{n,r_0}(M,E)}\cap\ke
\widetilde N_{r_0}\big\vert_{\Cal Z^0_{n,r_0}(M,E)}=\{0\}\,.
\end{equation} As the index of $\widetilde N_{r_0}\big\vert_{\Cal Z^0_{n,r_0}(M,E)}$
is zero, this yields
\begin{equation*}\begin{split}
\Cal Z^0_{n,r_0}(M,E)&=\im\widetilde N_{r_0}\big\vert_{\Cal
Z^0_{n,r_0}(M,E)}\oplus\ke\widetilde N_{r_0}\big\vert_{\Cal
Z^0_{n,r_0}(M,E)}\\
&=\im\widetilde N_{r_0}\big\vert_{\Cal
Z^0_{n,r_0}(M,E)}\oplus\Big(\ke\widetilde N_{r_0}\cap\Cal
Z^0_{n,r_0}(M,E)\Big)\,.
\end{split}\end{equation*}
Again by \eqref{17.4.07f}, this proves \eqref{17.4.08c}.

\vspace{2mm} From \eqref{17.4.08a} and \eqref{17.4.08c} it follows
that
\begin{equation}\label{20.4.07''}
\Cal B^{\alpha\rightarrow 0}_{n,r_0}(M,E)=\Cal B^{0\rightarrow
0}_{n,r_0}(M,E)\,.
\end{equation}

The construction of the operators $K'_{r_0}$ and $T'_{r_0+1}$ and
the proof of there properties are exactly the same as for the
operators $K_{q-1}'$ and $T_q'$ in \cite{LaLeglobal}. We do not
repeat it here and this ends the initialization of the induction.

\vspace{2mm} Now we assume that the operators $K'_r$, $K''_r$,
$T'_r$ and $T'_{r+1}$ of Lemma \ref{15.4.07-} are construct for some
$r_0\le r\le n-k$ and that they satisfy the properties (i) to (v) of
Lemma \ref{15.4.07-}.

We set
\begin{equation}\label{17.4.07p}
K''_{r+1}:=dT_{r+1}'\,.
\end{equation}
Since $T_{r+1}'$ is a finite dimensional continuous linear operator
from $\Cal C^0_{n,r+1}(M,E)$ to $\Cal C^\infty_{n,r}(M,E)$, then it
is clear that also $K''_{r+1}$ is such an operator.  Set
$$\widetilde N_{r+1}:=I+K_{r+1}+K''_{r+1}.$$ Then, by lemma
\ref{10.4.07a}, $\widetilde N_{r+1}$ is a Fredholm endomorphism with
index zero of $\Cal C^0_{n,r+1}(M,E)$, and
\begin{equation}\label{17.4.07}
\ke \widetilde N_{r+1}\subseteq \Cal C^\infty_{n,r+1}(M,E)\,.
\end{equation}

\vspace{2mm} Now we first prove that
\begin{equation}\label{17.4.07a}
\ke \widetilde N_{r+1}\cap\Cal B^{0\rightarrow
0}_{n,r+1}(M,E)=\{0\}\,.
\end{equation}
Let $g\in \Cal B^{0\rightarrow 0}_{n,r+1}(M,E)$ with $\widetilde
N_{r+1}g=0$ be given. Take $f\in \Cal C^0_{n,r}(M,E)$ with $g=df$.
Then, by definition of $\widetilde N_{r+1}$ and $K_{r+1}''$, we get
$$
0=\widetilde N_{r+1}g=\widetilde
N_{r+1}df=(I+K_{r+1})df+K_{r+1}''df=(I+K_{r+1})df+dT_{r+1}'df\,.
$$
By \eqref{21.3.07'}, this implies
\begin{equation}\label{17.4.07b}
0=(dT_{r+1}+dT_{r+1}')df\,. \end{equation} Since, by hypothesis of
induction, the operators $K'_r$, $K''_r$, $T'_r$ and $T'_{r+1}$
satisfy statement (i) of lemma \ref{15.4.07-}, we have
\begin{equation*}
N_{r}f= d(T_{r}+T_{r}')f+(T_{r+1}+T_{r+1}')df\,.
\end{equation*}
Hence $dN_{r}f=d(T_{r+1}+T_{r+1}')df$, which implies by
\eqref{17.4.07b} that
\begin{equation}\label{16.5.07} N_{r}f\in \Cal
Z^0_{n,r}(M,E).
\end{equation}
By hypothesis of induction, \eqref{15.5.07} is valid for $r$, and
then \eqref{16.5.07}  implies that
$$
N_{r}f\in \Cal B^{\alpha\rightarrow 0}_{n,r}(M,E)\,.
$$
Therefore as, by hypothesis of induction, \eqref{15.4.07+++} is
valid for $r$, we can find $\widetilde f\in \Cal
B^{\alpha\rightarrow 0}_{n,r}(M,E)$ with
$$
N_{r}f=N_{r}\widetilde f\,.
$$
Hence $f-\widetilde f\in \ke N_{r}$ and since, by hypothesis of
induction, \eqref{15.4.07--} is valid for $r$, this implies that
$f-\widetilde f\in \Cal Z^\infty_{n,r}(M,E)$. As $\widetilde
f\in\Cal Z^0_{n,r}(M,E)$, this further implies that $ f\in \Cal
Z^0_{n,r}(M,E)$. Hence $g=df=0$. This completes the proof of
\eqref{17.4.07a}.

Next in the same way as for $r=r_0$ we get
\begin{equation}\label{17.4.07c}
\Cal Z^0_{n,r+1}(M,E)=\Cal B^{\alpha\rightarrow
0}_{n,r+1}(M,E)\oplus\Big(\ke\widetilde N_{r+1}\cap\Cal
Z^0_{n,r+1}(M,E)\Big)\,,
\end{equation}
and then the construction of the operators $K'_{r}$ and $T'_{r+1}$
is an exact repetition of the construction of the operators
$K'_{r_0}$ and $T'_{r_0+1}$.
\end{proof}

\begin{proof}[End of the proof of Theorem \ref{20.4.07a}]
We set $\Cal H_r=\ke N_r$ for all $0\le r\le q-1$ and $n-k-q+1\le
r\le n-k$. Since the operators $N_r$ are Fredholm operators and $\ke
N_r\subseteq \Cal Z^\infty_{n,r}(M,E)$, the spaces $\Cal H_r$ are
finite subspaces of $\Cal Z^\infty_{n,r}(M,E)$.

By \eqref{17.4.07***}, we have
\begin{equation}\label{19.4.07a}
\Cal C^0_{n,r}(M,E)= \im N_r\oplus\Cal H_r
\end{equation}
 and we define $P_r$ as the linear
projection in $\Cal C^0_{n,r}(M,E)$ with
\begin{equation}\label{19.4.07b}
\im P_r=\Cal H_r\quad\text{and}\quad\ke P_r=\im N_r\,,\quad 0\le
r\le q-1\,~{\rm or}~n-k-q+1\le r\le n-k\,.
\end{equation}
Since the spaces $\im N_r$ and $H_r$ are closed in the $\Cal C^0$-topology,
these projections are continuous with respect to the $\Cal
C^0$-topology. Since, by \eqref{20.4.07'} and \eqref{15.4.07+++},
$\Cal B_{n,r}^{0\rightarrow 0}(M,E)\subseteq \im N_r$, this implies
\eqref{20.4.07}.

Set
$$
\widehat N_r=N_r+P_r\,.
$$
Then, by \eqref{19.4.07a} and \eqref{19.4.07b},  $\widehat N_r$ is
an isomorphism of $\Cal C^0_{n,r}(M,E)$. If $0\le r\le q-2$ or
$n-k-q+1\le r\le n-k$, then moreover
$$
\widehat N_{r}\Big((\Dom d)^0_{n,r}(M,E)\Big)=(\Dom d)^0_{n,r}(M,E)
$$ and therefore $\widehat N_{r}\big\vert_{(\Dom d)^0_{n,r-1}(M,E)}$
is an isomorphism of $(\Dom d)^0_{n,r}(M,E)$. Indeed, since $\ke
K_r\subseteq \Cal Z^\infty_{n,r}(M,E)\subseteq (\Dom
d)^0_{n,r}(M,E)$, this follows from part (iv) of lemma
\ref{15.4.07-}.

Setting
\begin{equation}
A_r=\begin{cases} &\widehat N_{r-1}^{-1}(T_r+T_r')\,,\qquad 1\le
r\le q \\
&(T_r+T_r')\widehat N_{r}^{-1}\,,\qquad n-k-q+1\le r\le n-k\,,
\end{cases}
\end{equation}
now we define the continuous linear operators
\begin{equation*}
A_r:\Cal C^0_{n,r}(M,E)\longrightarrow \Cal
C^0_{n,r-1}(M,E)\,,\qquad 1\le r\le q~{\rm or}~n-k-q+1\le r\le
n-k\,.
\end{equation*}

\vspace{3mm} {\em Proof of (i):} For $0\le r\le q-1$ the proof of
the assertions (i) and (ii) of the theorem is contained in section 6
of \cite{LaLeglobal}.

Let $n-k-q+1\le r\le n-k$ and $l\in \N$ be given. By definition,
$\widehat N_{r}$ is of the form $\widehat N_{r}=I+R$ where
$R\big\vert_{\Cal C^l_{n,r}(M,E)}$ is a continuous linear operator
from $\Cal C^l_{n,r}(M,E)$ to $\Cal C^{l+\alpha}_{n,r}(M,E)$. Since
$(T_r+T_r')\big\vert{\Cal C^l_{n,r}(M,E)}$ is a continuous from
$\Cal C^l_{n,r}(M,E)$ to $\Cal C^{l+\alpha}_{n,r-1}(M,E)$, it
follows that
$$
A_r=\widehat (T_r+T_r')N_r^{-1}
$$ is continuous
from $\Cal C^l_{n,r}(M,E)$ to $\Cal C^{l+\alpha}_{n,r-1}(M,E)$.

\vspace{2mm} {\em Proof of (ii):} Let $n-k-q+1\le r\le n-k$. We
first prove that
\begin{equation}\label{20.4.07i}
\widehat N_{r+1}^{-1}d\big\vert_{(\Dom d)^0_{n,r}(M,E)}=d\widehat
N^{-1}_{r}\big\vert_{(\Dom d)^0_{n,r}(M,E)}\,.
\end{equation}
Since  $\widehat N_{r}\big\vert_{(\Dom d)^0_{n,r}(M,E)}$ is an
isomorphism of $(\Dom d)^0_{n,r}(M,E)$, this is equivalent to
\begin{equation}\label{20.4.07ii}
d\widehat N_{r-1}\big\vert_{(\Dom d)^0_{n,r-1}(M,E)}=\widehat
N_rd\big\vert_{(\Dom d)^0_{n,r-1}(M,E)}\,.
\end{equation}
Let $g\in (\Dom d)^0_{n,r}(M,E)$ be given. Then, by \eqref{20.4.07},
\eqref{15.4.07---} and \eqref{15.4.07++},
$$
dP_{r-1}g=0\,,\quad P_rdg=0\quad\text{and}\quad dN_{r-1}g=N_rdg\,.
$$
Hence
$$
d\widehat
N_{r-1}g=dN_{r-1}g+dP_{r-1}g=dN_{r-1}g=N_rdg=N_rdg+P_rdg=\widehat
N_rdg\,.
$$

Now consider $f\in (\Dom d)^0_{n,r}(M,E)$. Then, by \eqref{15.4.07a}
and definition of the operators $A_r$,
\begin{equation}\label{19.4.07p}
\widehat N_{r}N_{r}^{-1}f= dA_rf+(T_{r+1}f+T'_{r+1})dN_{r}^{-1}f
\end{equation}
Since $\im N_{r}=\ke P_r$, we have $P_rN_r=0$. Hence
$(I-P_r)\widehat N_r=(I-P_r)(N_r+P_r)=N_r$ and therefore
$N_r\widehat N_r^{-1}=I-P_r$. Therefore, \eqref{19.4.07p} takes the
form
\begin{equation}\label{19.4.07q}
f-P_rf=dA_rf+(T_{r+1}f+T'_{r+1})dN_{r}^{-1}f
\end{equation}
and together with \eqref{20.4.07i} this gives \eqref{21.3.07'''}.
\end{proof}

As a direct consequence of Theorem \ref{20.4.07a}, we obtain a new
proof of the Dolbeault isomorphism for the $\opa_b$-cohomology and
of the regularity theorem for the tangentiel Cauchy-Riemann operator
in compact CR manifolds

\begin{cor}\label{dolbeault}
Suppose, for some $0<\alpha<1$ and some integer $q$ with $1\le q\le
n-k$,  condition $H(\alpha,q)$ is satisfied. For all $1\le r\le q$
or $n-k-q+1\le r\le n-k$ and $l\in \N\cup\{\infty\}$, the space
 $ \Cal B_{n,r}^{l+\alpha\rightarrow l}(M,E)$ is closed in $\Cal
C_{n,r}^{l}(M,E)$,
\begin{equation}\label{21.4.07-} \Cal
B_{n,r}^{l+\alpha\rightarrow l}(M,E)=\Cal B_{n,r}^{0\rightarrow
l}(M,E)\,,
\end{equation}
and the natural map
\begin{equation}\label{21.4.07+}
\frac{\Cal Z^\infty_{n,r}(M,E)}{d\Cal \Cal
C^\infty_{n,r-1}(M,E)}\rightarrow\frac{\Cal Z^l_{n,r}(M,E)}{\Cal
B_{n,r}^{0\rightarrow l}(M,E)} \end{equation} is injective.

If $1\le r\le q-1$ or $n-k-q+1\le r\le n-k$, then moreover, for all
$l\in \N\cup\{\infty\}$, there exist finite dimensional subspaces
$\Cal H_r$ of $\Cal Z^\infty_{n,r}(M,E)$ such that
\begin{equation}\label{19.4.07o}
 \Cal Z^l_{n,r}(M,E)=
\Cal B_{n,r}^{l+\alpha\rightarrow l}(M,E)\oplus\Cal H_r\,,
\end{equation} and hence \eqref{21.4.07+} is an isomorphism and the
cohomology groups in \eqref{21.4.07+} are finite dimensional.
\end{cor}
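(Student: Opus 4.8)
The plan is to deduce the corollary formally from Theorem~\ref{20.4.07a}, using the operators $A_r$ together with the smoothing property \eqref{19.4.07''}, the projections $P_r$ (with $\im P_r=\mathcal H_r$ and $\mathcal B^{0\rightarrow 0}_{n,r}(M,E)\subseteq\ker P_r$ by \eqref{20.4.07}), the homotopy formula \eqref{21.3.07'''}, and the finite dimensionality of the $\mathcal H_r$. No further analysis is needed; the work is essentially bookkeeping over the index ranges.

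First I would treat $r$ in the range where $P_r$ is available, that is $1\le r\le q-1$ or $n-k-q+1\le r\le n-k$, and establish that $\mathcal B^{l+\alpha\rightarrow l}_{n,r}(M,E)=\mathcal Z^l_{n,r}(M,E)\cap\ker P_r$ for every $l\in\N\cup\{\infty\}$. For the inclusion ``$\subseteq$'': if $f=du$ with $u\in\mathcal C^{l+\alpha}_{n,r-1}(M,E)$ then $df=0$ and $f\in\mathcal B^{0\rightarrow 0}_{n,r}(M,E)\subseteq\ker P_r$ by \eqref{20.4.07}. For ``$\supseteq$'': if $f\in\mathcal Z^l_{n,r}(M,E)$ and $P_rf=0$, then by \eqref{21.3.07'''} we get $f=f-P_rf=dA_rf$ with $A_rf\in\mathcal C^{l+\alpha}_{n,r-1}(M,E)$ by \eqref{19.4.07''}, so $f\in\mathcal B^{l+\alpha\rightarrow l}_{n,r}(M,E)$. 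Since $P_r$ has finite rank, $P_rf=\sum_i\ell_i(f)h_i$ with $h_i$ a basis of $\mathcal H_r\subseteq\mathcal C^\infty_{n,r}(M,E)$ and $\ell_i$ continuous on $\mathcal C^0_{n,r}(M,E)$, hence on $\mathcal C^l_{n,r}(M,E)$; thus $P_r$ acts continuously on $\mathcal C^l_{n,r}(M,E)$ and $\mathcal Z^l_{n,r}(M,E)\cap\ker P_r$ is closed there, which gives the closedness of $\mathcal B^{l+\alpha\rightarrow l}_{n,r}(M,E)$. Combining the identity with the trivial inclusions $\mathcal B^{l+\alpha\rightarrow l}_{n,r}(M,E)\subseteq\mathcal B^{0\rightarrow l}_{n,r}(M,E)\subseteq\mathcal B^{0\rightarrow 0}_{n,r}(M,E)\subseteq\ker P_r$ yields $\mathcal B^{0\rightarrow l}_{n,r}(M,E)=\mathcal Z^l_{n,r}(M,E)\cap\ker P_r=\mathcal B^{l+\alpha\rightarrow l}_{n,r}(M,E)$, i.e.\ \eqref{21.4.07-}. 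Finally, $P_r$ being a projection onto $\mathcal H_r$ gives $\mathcal B^{l+\alpha\rightarrow l}_{n,r}(M,E)\cap\mathcal H_r=\{0\}$, while for $f\in\mathcal Z^l_{n,r}(M,E)$ one has $f=(f-P_rf)+P_rf$ with $f-P_rf=dA_rf\in\mathcal B^{l+\alpha\rightarrow l}_{n,r}(M,E)$ and $P_rf\in\mathcal H_r$; hence $\mathcal Z^l_{n,r}(M,E)=\mathcal B^{l+\alpha\rightarrow l}_{n,r}(M,E)\oplus\mathcal H_r$, which is \eqref{19.4.07o}. Taking $l=\infty$ identifies $\mathcal Z^\infty_{n,r}(M,E)/d\mathcal C^\infty_{n,r-1}(M,E)$ with $\mathcal H_r$, and under these identifications the natural map \eqref{21.4.07+} becomes the identity of $\mathcal H_r$; so it is an isomorphism onto a finite dimensional space, in particular injective.

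It remains to handle $r=q$ in the case $q\notin[n-k-q+1,n-k]$, where $P_q$ does not exist; here I would use the homotopy formula at degree $q-1$, which lies in the admissible range since $0\le q-1\le q-1$. If $g\in\mathcal B^{0\rightarrow l}_{n,q}(M,E)$, write $g=df$ with $f\in\mathcal C^0_{n,q-1}(M,E)$; then $f\in(\Dom d)^0_{n,q-1}(M,E)$ and \eqref{21.3.07'''} gives $f-P_{q-1}f=dA_{q-1}f+A_qg$, whence, applying $d$ and using $dP_{q-1}f=0$ (because $P_{q-1}f\in\mathcal H_{q-1}\subseteq\mathcal Z^\infty_{n,q-1}(M,E)$), $g=d(A_qg)$ with $A_qg\in\mathcal C^{l+\alpha}_{n,q-1}(M,E)$ by \eqref{19.4.07''}. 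The same computation applied to an arbitrary $g\in\mathcal B^{l+\alpha\rightarrow l}_{n,q}(M,E)$, together with the trivial converse inclusions, gives $\mathcal B^{0\rightarrow l}_{n,q}(M,E)=\mathcal B^{l+\alpha\rightarrow l}_{n,q}(M,E)=\{g\in\mathcal C^l_{n,q}(M,E):g=d(A_qg)\}$; since $g\mapsto g-d(A_qg)$ is continuous from $\mathcal C^l_{n,q}(M,E)$ into the space of currents on $M$, this set is closed in $\mathcal C^l_{n,q}(M,E)$. For the injectivity of \eqref{21.4.07+} at $r=q$, apply the same computation to $\omega\in\mathcal Z^\infty_{n,q}(M,E)$ lying in $\mathcal B^{0\rightarrow l}_{n,q}(M,E)$: it yields $\omega=d(A_q\omega)$ with $A_q\omega\in\mathcal C^\infty_{n,q-1}(M,E)$, so $[\omega]=0$ already in $\mathcal Z^\infty_{n,q}(M,E)/d\mathcal C^\infty_{n,q-1}(M,E)$.

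I do not expect a genuine obstacle: all the content is transported from Theorem~\ref{20.4.07a}. The two points that need a little care are the isolation of the extra top degree $r=q$, where $P_q$ is unavailable and must be replaced by the degree $q-1$ homotopy formula, and the verification that the relevant kernels are closed specifically in the $\mathcal C^l$-topology (and not merely in the topology of currents), which comes down to the continuity of $P_r$, respectively of $g\mapsto g-d(A_qg)$, in that topology.
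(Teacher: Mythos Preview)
Your argument is correct and follows the paper's overall strategy of reading everything off from Theorem~\ref{20.4.07a}: both proofs obtain \eqref{21.4.07-} and the injectivity of \eqref{21.4.07+} from the key identity $dA_rf=f$ for $f\in\mathcal B^{0\to 0}_{n,r}(M,E)$ together with the smoothing property \eqref{19.4.07''}, and both handle the extra degree $r=q$ by descending to the homotopy formula at level $q-1$.

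The one genuine difference lies in how closedness of $\mathcal B^{l+\alpha\to l}_{n,r}(M,E)$ is established for $1\le r\le q-1$ or $n-k-q+1\le r\le n-k$. The paper goes back to Lemma~\ref{21.03.07} and argues via the Fredholm property of $(I+K_r)\big\vert_{\mathcal Z^l_{n,r}(M,E)}$, so that $\mathcal B^{l+\alpha\to l}_{n,r}(M,E)$ contains a closed subspace of finite codimension in $\mathcal Z^l_{n,r}(M,E)$ and is therefore itself closed. Your route is more self-contained: you identify $\mathcal B^{l+\alpha\to l}_{n,r}(M,E)=\mathcal Z^l_{n,r}(M,E)\cap\ker P_r$ and use only that the finite-rank projection $P_r$ is automatically continuous on $\mathcal C^l_{n,r}(M,E)$. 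This avoids reinvoking the almost-homotopy operators $K_r$ and gives a cleaner argument, at the (small) cost of checking explicitly that $P_r$ respects the $\mathcal C^l$-topology. For $r=q$ the two closedness arguments are essentially the same: the paper phrases it as a sequential argument using that $d$ is a closed operator, while you phrase it as $\{g:g=d(A_qg)\}$ being the preimage of $0$ under a continuous map.
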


\begin{proof}
Let $1\le r\le q$ or $n-k-q+1\le r\le n-k$. It follows from
\eqref{21.3.07'''} and \eqref{20.4.07} that
\begin{equation}\label{19.4.07''''}
dA_rf=f\qquad\text{for all }f\in \Cal B_{n,r}^{0\rightarrow
0}(M,E)\,,
\end{equation}
and \eqref{21.4.07-} follows from \eqref{19.4.07''''} and
\eqref{19.4.07''}.

Let $1\le r\le q-1$ or $n-k-q+1\le r\le n-k$ and $l\in
\N\cup\{\infty\}$. From Lemma \ref{21.03.07}, we get that
$(I+K_{r})\big\vert_{\Cal Z^l_{n,{r}}(M,E)}$ is a Fredholm operator
with index zero of $\Cal Z^l_{n,{r}}(M,E)$ with $(I+K_{r})(\Cal
Z^l_{n,{r}}(M,E))\subseteq \Cal B_{n,{r}}^{l+\alpha\rightarrow
l}(M,E)$. Since $\Cal B_{n,{r}}^{l+\alpha\rightarrow l}(M,E)$ is the
image of a closed linear operator this implies that $\Cal
B_{n,{r}}^{l+\alpha\rightarrow l}(M,E)$ is a closed subspace for the
$\cc^l$-topology.

To prove that $\Cal B^{l+\alpha\rightarrow l}_{n,q}(M,E)$ is closed
in the $\Cal C^l$-topology, we consider a sequence $f_\nu\in \Cal
B^{l+\alpha\rightarrow l}_{n,q}(M,E)$ which converges in the $\Cal
C^l$-topology to some $f\in \Cal C^{l}_{n,q}(M,E)$. Since, by part
(i) of Theorem \ref{20.4.07a}, $A_r$ is continuous as operator from
$\Cal C^{l}_{n,q}(M,E)$ to $\Cal C^{l+\alpha}_{n,q-1}(M,E)$, then
the sequence $A_qf_\nu$ converges in the $\Cal
C^{l+\alpha}$-topology to some $g\in \Cal
C^{l+\alpha}_{n,q-1}(M,E)$, where, by \eqref{19.4.07''''},
$dA_qf_\nu=f_\nu$ for all $\nu$. Since the operator
$$
d:\Cal C^{l+\alpha}_{n,q-1}(M,E)\longrightarrow \Cal
B^{l+\alpha\rightarrow l}_{n,q}(M,E)
$$ is closed, this implies that $dg=f$, i.e. $f\in \Cal
B^{l+\alpha\rightarrow l}_{n,q}(M,E)$.

Since, by \eqref{19.4.07''},
$$A_r\Big(\Cal C^\infty_{n,r}(M,E)\cap\Cal B_{n,r}^{0\rightarrow
0}(M,E) \Big)\subseteq \Cal C^\infty_{n,r-1}(M,E)\,,$$ it follows
from \eqref{19.4.07''''} that
$$d\Cal
C^\infty_{n,r-1}(M,E)=\Cal C^\infty_{n,r}(M,E)\cap  \Cal
B^{0\rightarrow l}_{n,r}(M,E)\,,$$ which means that \eqref{21.4.07+}
is injective.

Now let $1\le r\le q-1$ or $n-k-q+1\le r\le n-k$, we define $\Cal
H_r$ by $\im P_r=\Cal H_r$, where $P_r$ is the projector from
Theorem \ref{20.4.07a}. Then, by \eqref{15.4.07---},
\eqref{20.4.07'},
$$
Z^0_{n,r}(M,E)=\Cal B^{0\rightarrow 0}_{n,r}(M,E)\oplus \Cal H_r\,.
$$ Since $\Cal H_r\subseteq \Cal
C^{\infty}_{n,r}(M,E)$, this implies that
$$
Z^l_{n,r}(M,E)=\Cal B^{0\rightarrow l}_{n,r}(M,E)\oplus \Cal
H_r\qquad\text{for all }l\in\N\cup\{\infty\}\,.
$$
By \eqref{21.4.07-} this means \eqref{19.4.07o}.
\end{proof}

\section{Appendix}\label{ap}

In the 40's, Friedrichs has proven a density lemma for the
$L^2$-topology for partial differential operators in $\rb^n$. If $P$
is such an operator he proves that the $\ci$-smooth functions are
dense in the domain of definition of $P$ for the graph norm. This
result has been later extend to the $L^p$-topology, $1\le p<\infty$.
Here we want to generalize it to differential operators between
vector bundles for the $\cc^k$-topology.

Such differential operators between vector bundles appears
naturally, for example the tangential Cauchy-Riemann operator on a
CR generic submanifold of a complex manifold is a differential
operator between two bundles of differential forms.

Let $X$ be a paracompact differential manifold of class $\ci$ of
real dimension $n$ and $E$ and $F$ two vector bundles of class
$\ci$, respectively of rank $p$ and $q$.

Let $\uc=(U_i)_{i\in I}$ be a locally finite open covering of $X$ by
coordinates domains which are also trivialization domains for both
$E$ and $F$ and $(M_{ij})_{i,j\in I}$ be the transition matrices of
$E$ on $(U_{ij}=U_i\cap U_j)_{i,j\in I}$ and $(N_{ij})_{i,j\in I}$
be the transition matrices of $F$ on $(U_{ij}=U_i\cap U_j)_{i,j\in
I}$.

For $k\in\N\cup\{\infty\}$ we denote by $\Gamma^{k}(X,E)$,
respectively $\Gamma^{k}(X,F)$, the vector space of $\cc^{k}$-smooth
sections of $E$, respectively $F$. Some trivialization being given
on $\uc$, for each $f\in \Gamma^k(X,E)$, $f\big\vert_{U_i}$ is given
by a $p$-vector of functions $f_i=(f_i^1,\dots,f_i^p)$ and on
$U_i\cap U_j$ we have $f_i=M_{ij}f_j$.

A linear differential operator $P$ of order $1$ and class $\cc^k$
between the fiber bundles $E$ and $F$ is a linear map between
$\Gamma^{k+1}(X,E)$ and $\Gamma^{k}(X,F)$ given in some
trivialization of $E$ and $F$ by a family of linear operators $P_i$
from $\Gamma^{k+1}(U_i,E)$ into $\Gamma^{k}(U_i,F)$, $i\in I$, such
that, for $f\in\Gamma^{k+1}(X,E)$,

(i) $P_i(f_i)=N_{ij}P_j(M_{ij}f_j)$ on $U_i\cap U_j$,

(ii) each $P_i$ is a system of partial differential linear equations
of order $1$, i.e. $P_i=(L_i^{rs})$ is given by a $(q,p)$-matrice of
partial differential linear equations, where, if $(x_1,\dots,x_n)$
denotes some coordinates on $U_i$, $D_l=\frac{\pa}{\pa x_l}$, $1\le
l\le n$, the partial derivative relatively to the coordinate $x_l$,
$a_l^{rs}$, $1\le l\le n$, $1\le r\le q$ and $1\le s\le p$,
$\cc^{k+1}$ functions on $\rb^n$ and $a_0^{rs}$, $1\le r\le q$ and
$1\le s\le p$, $\cc^{k}$ functions on $\rb^n$, then
$$L_i^{rs}=\sum_{l=1}^n a_l^{rs}D_l + a_0^{rs}\,.$$

Let $P$ be a linear differential operator $P$ of order $1$ and class
$\cc^k$ between the fiber bundles $E$ and $F$, we shall say that a
section $f$ of class $\cc^k$ of $E$ is in the domain of definition
of $P$, $\Dom P$, if $Pf$, which is defined in the sense of
distributions, belongs to $\Gamma^{k}(X,F)$.

If $K$ is a compact subset of $X$ and $f\in\Dom P$, the graph
$\cc^k$-norm on $K$ of $f$ is defined by
$$\|f\|_{gr(K,k)}=\|f\|_{K,k}+\|Pf\|_{K,k}\,.$$

\begin{thm}\label{friedrichs}
{\rm Friedrichs'lemma for the $\cc^k$-topology.} Let $P$ be a linear
differential operator of order $1$ and class $\cc^k$ between two
fiber bundles $E$ and $F$ of class $\ci$ over a differential
manifold $X$ of class $\ci$. For each compact subset $K$ of $X$, the
$\ci$-smooth sections of $E$ are dense in the domain of definition
of $P$ for the graph $\cc^k$-norm on $K$.
\end{thm}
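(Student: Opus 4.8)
The plan is to reduce the global statement to a local one by a partition of unity, and to prove the local statement by the classical Friedrichs mollification argument, paying attention to the extra derivatives demanded by the $\cc^k$-topology. First I would fix the compact set $K$, a section $f\in\Dom P$, and an $\varepsilon>0$, and observe that it suffices to approximate $f$ uniformly in the graph $\cc^k$-norm on $K$. Choosing a finite subcover $U_{i_1},\dots,U_{i_m}$ of $\uc$ covering $K$ and a subordinate partition of unity $(\chi_\nu)$, one writes $f=\sum_\nu \chi_\nu f$. Each $\chi_\nu f$ has compact support in a single chart $U_{i_\nu}$, and because $\chi_\nu$ is smooth, $\chi_\nu f$ again lies in $\Dom P$ (the operator $P$ applied to $\chi_\nu f$ differs from $\chi_\nu\,Pf$ by a zeroth-order term coming from the symbol of $P$ contracted with $d\chi_\nu$, both of which are in $\Gamma^k$). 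So it is enough to approximate, in a single coordinate/trivialization domain, a compactly supported $\cc^k$ vector of functions $g=(g^1,\dots,g^p)$ on $\rb^n$ with $Pg\in\cc^k$, by smooth compactly supported vectors, in the graph $\cc^k$-norm.

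Next I would carry out the mollification: let $\rho_\delta$ be a standard smooth approximate identity on $\rb^n$ and set $g_\delta=\rho_\delta * g$. Then $g_\delta$ is smooth, compactly supported for $\delta$ small, and $g_\delta\to g$ in the $\cc^k$-norm on $K$, since convolution commutes with the derivatives $D^\beta$, $|\beta|\le k$, and $D^\beta g$ is uniformly continuous with compact support. It remains to show $P g_\delta\to Pg$ in the $\cc^k$-norm. Writing $P=\sum_l a_l D_l + a_0$ componentwise, the point is to compare $P(\rho_\delta * g)$ with $\rho_\delta * (Pg)$, the latter obviously converging to $Pg$ in $\cc^k$. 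Their difference is the commutator
\begin{equation*}
[P,\rho_\delta *\,]g \;=\; \sum_{l=1}^n\Big( a_l\,D_l(\rho_\delta * g) - \rho_\delta * (a_l D_l g)\Big) \;+\; \Big(a_0(\rho_\delta * g) - \rho_\delta * (a_0 g)\Big),
\end{equation*}
and the classical Friedrichs lemma asserts exactly that this commutator tends to $0$ — but in the sup norm. The work here is to upgrade this to the $\cc^k$-norm, i.e. to show that $D^\beta$ of each such commutator term tends to $0$ uniformly on $K$ for every multi-index $\beta$ with $|\beta|\le k$.

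That upgrade is the main obstacle, and it is why the hypothesis puts the coefficients $a_l$ in $\cc^{k+1}$ and $a_0$ in $\cc^k$. The strategy is to apply Leibniz' rule to $D^\beta$ of the commutator and to recognize that every resulting term is, up to a bounded factor involving derivatives of the $a_l,a_0$, again a Friedrichs-type commutator applied to some $D^\gamma g$ or $D^\gamma(D_l g)$ with $|\gamma|\le k$ — together with genuine convolution-error terms of the form $\big(D^{\beta'}a_l\big)\big(\rho_\delta * D^{\beta''}D_l g - D^{\beta''}D_l g\big)$, which go to $0$ because convolution converges in sup norm and $D^{\beta''}D_l g$ is continuous only when the order does not exceed $k$; this forces one extra derivative on $a_l$ and explains the regularity assumption. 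For the true commutator pieces one invokes the elementary lemma of Friedrichs: if $a\in\cc^1$ and $h$ is bounded and continuous with compact support, then $a\,D_l(\rho_\delta * h) - \rho_\delta *(a D_l h)\to 0$ uniformly, proved by writing the expression as an integral against $D_l\rho_\delta(x-y)\,(a(x)-a(y))h(y)$, using the mean value theorem on $a$ to absorb the singular factor $|D_l\rho_\delta|\lesssim\delta^{-n-1}$ against $|a(x)-a(y)|\lesssim|x-y|\lesssim\delta$ on the support of $\rho_\delta$, and then dominated convergence. Once this is done uniformly for $|\beta|\le k$, summing the finitely many Leibniz terms and then re-summing over the partition of unity yields $\|f-g_\delta\|_{gr(K,k)}<\varepsilon$ for $\delta$ small, which proves the theorem. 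I would be careful only with bookkeeping: the number of terms is finite and all bounds are uniform on the fixed compact $K$, so no subtlety beyond the one-derivative loss is hidden.
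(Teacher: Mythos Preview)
Your proposal is correct and follows essentially the same route as the paper: localize by a partition of unity, mollify by convolution, and reduce the $\cc^k$-convergence of the commutator $[P,\rho_\delta\ast\,]$ via Leibniz' rule to the elementary $\cc^0$ Friedrichs lemma (coefficient in $\cc^1$, function in $\cc^0$). The only differences are organizational --- the paper treats the scalar local case first and globalizes last, and proves the elementary commutator lemma by a uniform bound plus density of smooth functions rather than by dominated convergence --- and a small bookkeeping point: after Leibniz, \emph{every} term is of commutator type $[(D^{\beta'}a_l),\rho_\delta\ast\,]D_l$ applied to $D^{\beta''}g\in\cc^0$, so no separate ``convolution-error terms'' involving $D^{\beta''}D_l g$ as a continuous function are needed (or available, since that would cost one derivative too many).
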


\begin{proof}
First consider the case when $X=\rb^n$, $E$ and $F$ are both trivial
bundles of rank respectively $p$ and $q$ and $P=(L^{rs})$ is a
system of partial differential equations. In particular for $p=q=1$
we get

\begin{lem}\label{pde}
Let $L=\sum_{l=1}^n a_lD_l+ a_0$ with $a_l\in\cc^{k+1}(\rb^n)$,
$1\le l\le n$, and $a_o\in\cc^k(\rb^n)$ and
$\varphi\in\cc^\infty(\rb^n)$ be a positive smooth function with
compact support in the unit ball of $\rb^n$ such that
$\int_{\rb^n}\varphi~dx=1$. Set for $\epsilon>0$ and $x\in\rb^n$,
$$\varphi_\epsilon (x)=\frac{1}{\epsilon}\varphi(\frac{x}{\epsilon^n})\,.$$
Then for each compact subset $K$ of $\rb^n$ and any $f\in\Dom(L)$
$$\|L(f*\varphi_\epsilon)-L(f)\|_{K,k}\to 0\,,$$
when $\epsilon\to 0$.
\end{lem}
\begin{proof}
Let $f\in\Dom(L)$ and set $L(f)=\wt L(f)+a_0 f$. By the classical
convergence properties of the convolution, we know that
$\|a_0(f*\varphi_\epsilon)-a_0f\|_{K,k}$ tends to $0$ when
$\epsilon$ tends to $0$. By the triangle inequality
$$\|\wt L(f*\varphi_\epsilon)-\wt L(f)\|_{K,k}\leq
\|\wt L(f*\varphi_\epsilon)-\wt L(f)*\varphi_\epsilon\|_{K,k}+\|\wt
L(f)*\varphi_\epsilon-\wt L(f)\|_{K,k}\,.$$ It follows again from
the classical properties of the convolution that, since $\wt
L(f)\in\cc^k(\rb^n)$, $\|\wt L(f)*\varphi_\epsilon-\wt L(f)\|_{K,k}$
tends to $0$, when $\epsilon$ tends to $0$. Now by definition of
$\wt L$, we have
\begin{equation}\label{reduction}
\|\wt L(f*\varphi_\epsilon)-\wt L(f)*\varphi_\epsilon\|_{K,k}\leq
\sum_{l=1}^n
\|a_lD_l(f*\varphi_\epsilon)-(a_lD_l(f))*\varphi_\epsilon\|_{K,k}\,.
\end{equation}

Therefore it is sufficient to prove that each term in
\eqref{reduction} converges to $0$ when $\epsilon$ tends to $0$. Let
$s=(s_1,\dots,s_n)\in\N^n$ be a multi-index of length less or equal
to $k$, we set $D^s=D_1^{s_1}\dots D_n^{s_n}$. Then
$$\|f\|_{K,k}=\sum_{|s|\le k}\|D^s\|_{K,0}\,.$$
From the Leibniz formula, we deduce
$$D^s(a_lD_l(f))=\sum_{r\le s} \binom{s}{r}(D^r a_l)(D_l(D^{s-r}f)$$
and the proof of the lemma is then reduced to  the following fact~:

($\clubsuit$)\hspace{5mm} Let $a\in\cc^1(\rb^n)$, $1\le l\le n$ and
$f\in\cc(\rb^n)$, then the smooth function
$aD_l(f*\varphi_\epsilon)-(aD_l(f))*\varphi_\epsilon$ converges
uniformly to  $0$ on $K$, when $\epsilon$ tends to $0$.

Note that it is clearly the case if the function $f$ is of class
$\ci$, because then
$D_l(f*\varphi_\epsilon=D_l(f)*\varphi_\epsilon)$. Let
$g\in\ci(\rb^n)$, then by linearity
\begin{equation}\label{dense}
aD_l(f*\varphi_\epsilon)-(aD_l(f))*\varphi_\epsilon=aD_l(g*\varphi_\epsilon)-(aD_l(g))*\varphi_\epsilon
+aD_l((f-g)*\varphi_\epsilon)-(aD_l(f-g))*\varphi_\epsilon
\end{equation}
Assume we can prove that there exists a constant $C$ independent of
$\epsilon\in ]0,1]$ such that
\begin{equation}\label{fond}
\|aD_l(f*\varphi_\epsilon)-(aD_l(f))*\varphi_\epsilon\|_{K,0}\le
C\|f\|_{K+B(0,1),0}\,,
\end{equation}
then from \eqref{dense} we get
\begin{equation*}
\|aD_l(f*\varphi_\epsilon)-(aD_l(f))*\varphi_\epsilon\|_{K,0}\le
\|aD_l(g*\varphi_\epsilon)-(aD_l(g))*\varphi_\epsilon\|_{K,0}
+C\|f-g\|_{K+B(0,1),0}\,.
\end{equation*}
and, since continuous functions in $\rb^n$ can be uniformly
approximate on $K+B(0,1)$ by $\ci$-smooth functions, this concludes
the proof of ($\clubsuit$) by classical arguments.

Now let us prove \eqref{fond}. First let us recall that
$$f*\varphi_\epsilon=\frac{1}{\epsilon^n}\int_X
f(x-y)\varphi(\frac{y}{\epsilon})~dy=\int_X f(x-\epsilon
y)\varphi(y)~dy\,.$$
Then
\begin{equation*}
aD_l(f*\varphi_\epsilon)-(aD_l(f))*\varphi_\epsilon=\int_X
\big(a(x)-a(x-\epsilon y)\big)(D_l f)(x-\epsilon y)\varphi(y)~dy\,,
\end{equation*}
and after an integration by parts
\begin{align*}
aD_l(f*\varphi_\epsilon)-(aD_l(f))*\varphi_\epsilon&=\int_X
\frac{1}{\epsilon}\big(a(x)-a(x-\epsilon y)\big) f(x-\epsilon
y)\frac{\pa\varphi}{\pa y_l}(y)~dy\\
&+\int_X \frac{\pa a}{\pa y_l}(x-\epsilon y)f(x-\epsilon
y)\varphi(y)~dy\,,
\end{align*}
since $\frac{\pa}{\pa y_l}(f(x-\epsilon y))=-\epsilon (D_l
f)(x-\epsilon y)$.

As $a$ is of class $\cc^1$, for $\epsilon\le 1$, there exists a
constant $M_K$ such that for $x\in K$ and $y\in \supp \varphi\subset
B(0,1)$ we have
$$|a(x)-a(x-\epsilon y)|\leq M_K \epsilon |y|\qquad {\rm and}\qquad
|\frac{\pa a}{\pa y_l}(x-\epsilon y)|\le M_K\,,$$ and therefore
\begin{equation*}
\|aD_l(f*\varphi_\epsilon)-(aD_l(f))*\varphi_\epsilon\|_{K,0}\le M_K
\big(\int_X (|y||\frac{\pa\varphi}{\pa
y_l}(y)|+|\varphi(y)|)~dy\big)\|f\|_{K+B(0,1),0}\,.
\end{equation*}
Setting $C=M_K \big(\int_X (|y||\frac{\pa\varphi}{\pa
y_l}(y)|+|\varphi(y)|)~dy\big)$, we get \eqref{fond}.
\end{proof}

Now if $P$ is given by a $(q,p)$-matrice $(L^{rs})$, using the
triangle inequality, we deduce easily from Lemma \ref{pde} that for
each compact subset $K$ of $\rb^n$ and any $f\in\Dom(P)$
$$\|P(f*\varphi_\epsilon)-P(f)\|_{K,k}\to 0\,,$$
when $\epsilon\to 0$, which proves Theorem \ref{friedrichs} for a
system.

\vspace{2mm} Now we have to globalize the situation.

Let $(\chi_i)_{i\in I}$ be a partition of the unity subordinated to
the open covering $\uc$ and $f\in\Gamma^k(X,E)$. After a choice of
coordinates in each $U_i$ and of a trivialization of $E$, we can
define $\chi_i f$ as a $p$-vector $f_i=(f_i^1,\dots,f_i^p)$ of
functions with compact support in $\rb^n$. Then, for $\epsilon$
sufficiently small,
$f_i*\varphi_\epsilon=(f_i^1*\varphi_\epsilon,\dots,f_i^p*\varphi_\epsilon)$
can be identify with a section $(f_i)_\epsilon$ of $E$ with compact
support in $U_i$. Set $f_\epsilon=\sum_{i\in I}(f_i)_\epsilon$, then
$f_\epsilon\in\Gamma^\infty(X,E)$ and for each compact subset of $X$
$\|f-f_\epsilon\|_{K,k}$ tends to $0$ when $\epsilon$ tends to $0$.

Moreover on one hand $Pf=\sum_{i\in I}P(\chi_i f)$ and, after a
choice of trivialization of $F$ over $U_i$, $P(\chi_i f)=P_i (f_i)$
 and on the other hand
$Pf_\epsilon=\sum_{i\in I}P(( f_i)_\epsilon)$ and, for $\epsilon$
sufficiently small, $P(( f_i)_\epsilon)=P_i(f_i*\varphi_\epsilon)$.
By the case of a system, which has been studied previously, $\|P_i
(f_i)-P_i(f_i*\varphi_\epsilon)\|_{K,k}$ tends to $0$ when
$\epsilon$ tends to $0$ and therefore $\|Pf-Pf_\epsilon\|_{K,k}$
tends to $0$ when $\epsilon$ tends to $0$, which proves the theorem.
\end{proof}

\providecommand{\bysame}{\leavevmode\hbox to3em{\hrulefill}\thinspace}
\providecommand{\MR}{\relax\ifhmode\unskip\space\fi MR }
\providecommand{\MRhref}[2]{%
  \href{http://www.ams.org/mathscinet-getitem?mr=#1}{#2}
}
\providecommand{\href}[2]{#2}

\vespa\vespa
\begin{flushright}
  \begin{minipage}[t]{8cm}

Department of Mathematics

Imperial College London

180 Queen's Gate

LONDON SW7 2AZ

United Kingdom

till.broennle07@imperial.ac.uk

\vespa

Universit\'e de Grenoble

   Institut Fourier

UMR 5582  CNRS/UJF

   BP 74

38402 St Martin d'H\`eres Cedex

France

Christine.Laurent@ujf-grenoble.fr

 \vespa

Institut f\"{u}r Mathematik

HUMBOLDT Universit\"{a}t zu Berlin

Rudower Chaussee 25

D-12489 Berlin 

Germany

leiterer@math.hu-berlin.de

  \end{minipage}
\end{flushright}

\enddocument

\end